\documentclass[
reqno, 
12pt]{amsart}
\usepackage{amssymb,amsmath, amsthm,latexsym, mathrsfs}

\usepackage{a4wide}

\usepackage{hyperref}

\usepackage[all]{xy}

\usepackage{tikz}

\theoremstyle{plain}

\newtheorem{lemma}{Lemma}[section]
\newtheorem{theorem}[lemma]{Theorem}
\newtheorem{proposition}[lemma]{Proposition}

\newtheorem{remark}[lemma]{Remark}

\newtheorem{definition}[lemma]{Definition}

\parskip=\bigskipamount

\def\Z{\mathbb Z}
\def\R{\mathbb R}

\def\d{\delta}

\def\t{\times}
\def\o{\otimes}

\def\ra{\rightarrow}
\def\la{\leftarrow}

\def\a{\alpha}

\def\z{\mathfrak z}

\def\D{\Delta}
\def\G{\Gamma}

\def\s{\sigma}

\def\SR{ \mathcal S(\mathbb R)}
\def\SC{ \mathcal S(\mathcal C)}

\def\DS{\mathcal D}

\def\C{\mathcal }

\parindent=0pt

\title[Irreducibility criterion for  induced  representations]
{Irreducibility criterion for representations \\ induced   by essentially unitary ones \\ (case of
 non-archimedean  $GL(n,\C A)$)}

\mark{Marko Tadi\'c}

\author{Marko Tadi\'c}

\address{Department of Mathematics, University of Zagreb
\\
Bijeni\v{c}ka 30, 10000 Zagreb,
 Croatia\\
Email: \tt tadic{\char'100}math.hr}

\keywords{non-archimedean local fields, division algebras, general linear  groups, Speh representations, parabolically induced representations, reducibility, unitarizability}
\subjclass[2000]{Primary: 22E50}

\thanks{The   
author was partly supported by 
Croatian Ministry of Science, Education and Sports grant
{\#}037-0372794-2804.}
\date{\today}

\begin{document}

\begin{abstract} 
Let $\C A$ be a finite dimensional central division algebra over a  local  non-archimedean field $F$. 
Fix any parabolic subgroup $P$ of $GL(n,\C A)$ and  a Levi factor $M$ of $P$. Let $\pi$ be an irreducible unitary representation of $M$ and $\varphi$ a (not necessarily unitary) character of $M$. We give an explicit necessary and sufficient condition for the parabolically induced representation
$$
\text{Ind}_P^{GL(n,\C A)}(\varphi\pi)
$$
to be irreducible.
\end{abstract}

\maketitle

\setcounter{tocdepth}{1}

%


\section{Introduction}\label{intro}

Let $F$ be a local non-archimedean field and let $\mathcal A$ be a finite dimensional central division algebra of rank $d_\C A$ over $F$. Put
$$
G_p=
GL(p,\C A).
$$
 For an irreducible essentially square integrable representation $\d$ of $G_p$, denote by $s_\d$ the smallest positive real number such that 
 $$
 \text{Ind}^{G_{2p}}(\d\o|\det |_F^{s(\d)}\d)
 $$
 reduces. Then $s_\d\in\Z$ and $s_\d|d_{\C A}$. Let
 $$
 \nu_\d:=|\det |_F^{s(\d)}.
 $$
For $A,B\in\mathbb Z$, $A\leq B$,  
the set $\{x\in\Z; A\leq x\leq B\}$ is called a $\Z$-segment. It is denoted by
$$
[A,B]_\Z.
$$
Let $\rho$ be an irreducible cuspidal representation of $G_p$.
Then we call
$$
[A,B]^{(\rho)}:=\{\nu_\rho^i\,\rho\, ;\ i\in [A,B]_\Z\}
$$ 
a segment in cuspidal representations.

Let  $\D=
[A,B]^{(\rho)}$. Consider the representation
$$
\text{Ind}^{G_{(n+1)p}}(\nu_\d^B\rho\o\nu_\d^{B-1}\rho\o\dots\o\nu_\rho^A\rho),
$$
parabolically induced from the appropriate  parabolic subgroup containing regular upper triangular matrices (see the second section).
Then the above representation has a unique irreducible subrepresentation, denoted by
$$
\d(\D).
$$
Further, $\d(\D) $ is essentially square integrable, and we get all the irreducible essentially square integrable representations in this way. Irreducible essentially square integrable representations are basic building blocks in the classification of non-unitary duals of general linear groups over $\C A$ via Langlands classification (see \cite{Z} and \cite{T-div-a} among others).

One of the first  cornerstones of the  representation theory of general linear groups over $\C A$ is the reducibility criterion for 
$$
\text{Ind}^{G_{n_1+n_2}}(\d_1\o\d_2)
$$
where $\d_i$ are irreducible essentially square integrable representations of $G_{n_i}$. We can have reducibility  only if we can write $\d_i=\d([A_i,B_i]^{(\rho)})$, $i=1,2$, for some integers $A_i,B_i$, and an  irreducible cuspidal representation $\rho$. Then
\begin{equation}
\label{seg-red}
\text{Ind}^{G_{n_1+n_2}}(\d([A_1,B_1]^{(\rho)})\o\d([A_2,B_2]^{(\rho)}))
\end{equation}
reduces if and only if holds the following
\begin{enumerate}
\item $[A_1,B_1] \cup [A_2,B_2]$ is a $\Z$-segment;

\item $A_1<A_2$ and $B_1<B_2$, or 
conversely\footnote{i.e. $A_2<A_1$ and $B_2<B_1$}.
\end{enumerate}

Let $\d=\d([A,B]^{(\rho)})$ be an irreducible essentially square integrable representation of $G_p$ and let $n$ be a positive integer. Denote
$$
C=A+n-1,\qquad D=B+n-1.
$$
Then the  representation 
 $$
 \text{Ind}^{G_{np}}(\d([\nu_\d^{C}\rho,\nu_\d^{D}\rho])\o \d([\nu_\d^{C-1}\rho,\nu_\d^{D-1}\rho])\o\dots \o\d([\nu_\d^{A}\rho,\nu_\d^{B}\rho])).
 $$
 has  a unique irreducible quotient\footnote{Obviously, for $n=1$ this is $\d([A,B]^{(\rho)})$.}, denoted by
 \begin{equation}
 \label{speh-def}
 u_{ess}
\left(
\begin{matrix}
A  \hskip3mm & B  \hskip3mm
\\
    \hskip3mm C&    \hskip3mm D
\end{matrix}
 \right)^{(\rho)}.
 \end{equation}
This representation is essentially unitarizable
(i.e., it becomes unitarizable after twist by a character; see \cite{BR-Tad}),  and called essentially Speh representation. Representations \eqref{speh-def} are basic building blocks in the classification of unitary duals of general linear groups over $\C A$  (see \cite{T-AENS}, \cite{Se} and \cite{BHLS} among others). Irreducible unitary representations are fully induced by a tensor product of essentially Speh representations.

Now we shall present a simple and natural  generalization of the above criterion for reducibility of \eqref{seg-red} to the case of essentially Speh representations.
First, we define
$$
\left(
\begin{matrix}
A_1 \hskip3mm & B_1  \hskip3mm
\\
    \hskip3mm C_1&    \hskip3mm D_1
\end{matrix}
 \right)
 <_{strong}
 \left(
\begin{matrix}
A_2  \hskip3mm & B_2  \hskip3mm
\\
    \hskip3mm C_2&    \hskip3mm D_2
\end{matrix}
 \right)
\iff
A_1<A_2,\quad B_1<B_2,\quad C_1<C_2,\quad D_1<D_2.
$$

\begin{theorem} Let $\pi_1$ and $\pi_2$ be   essentially Speh representations of $G_{p_1}$ and $G_{p_2}$ respectively. If the representation
\begin{equation}
\label{r}
\text{Ind}^{\; G_{p_1+p_2}}(\pi_1\o \pi_2)
\end{equation}
   reduces, then we can find an irreducible cuspidal representation $\rho$ and  $A_i,B_i,C_i,D_i\in\Z$, such that
  $$
  \pi_i
  =u_{ess}
\left(
\begin{matrix}
A_i  \hskip3mm & B_i  \hskip3mm
\\
    \hskip3mm C_i&    \hskip3mm D_i
\end{matrix}
 \right)^{(\rho)}
  $$
  for $i=1,2$.
  Now 
  $\text{Ind}^{\; G_{p_1+p_2}}(\pi_1\o \pi_2)$
   reduces if and only if
      \begin{enumerate}
\item
 $
[ A_1, D_1]_\Z\cup [ A_2, D_2]_\Z
 $
  is a $\Z$-segment; 
  \item
$$
\left(
\begin{matrix}
A_1  \hskip3mm & B_1  \hskip3mm
\\
    \hskip3mm C_1&    \hskip3mm D_1
\end{matrix}
 \right)
 <_{strong}
 \left(
\begin{matrix}
A_2  \hskip3mm & B_2  \hskip3mm
\\
    \hskip3mm C_2&    \hskip3mm D_2
\end{matrix}
 \right) 
 \qquad
 \text{or conversly.}
 $$
\end{enumerate}
\end{theorem}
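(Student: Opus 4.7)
The plan is to reduce the problem in two steps: first, to the case where both essentially Speh representations are built from twists of a common cuspidal $\rho$; and second, to the segment reducibility criterion \eqref{seg-red} applied to the standard modules whose Langlands quotients are the $\pi_i$. For the first reduction, I would show that if the cuspidal supports of $\pi_1$ and $\pi_2$ share no $\rho$ up to $\nu_\rho^{\Z}$-twists, then $\pi_1$ and $\pi_2$ lie in different Bernstein components and $\text{Ind}(\pi_1 \o \pi_2)$ is automatically irreducible; equivalently this may be checked directly by the geometric lemma applied to Jacquet modules. This forces the common $\rho$ and hence the quadruple form $\pi_i = u_{ess}(\ldots)^{(\rho)}$ in the statement.

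With that reduction in hand, I would realize each $\pi_i$ as the Langlands quotient of its standard module
\[
\lambda_i \ =\ \d([C_i, D_i]^{(\rho)}) \o \d([C_i-1, D_i-1]^{(\rho)}) \o \cdots \o \d([A_i, B_i]^{(\rho)}),
\]
so that $\text{Ind}(\pi_1 \o \pi_2)$ is a quotient of $\text{Ind}(\lambda_1 \o \lambda_2)$. Reducibility of $\text{Ind}(\pi_1 \o \pi_2)$ is then controlled by whether some pair of segments, one from $\lambda_1$ and one from $\lambda_2$, is linked in the sense of (1)--(2) of criterion \eqref{seg-red}. For the ``if'' direction, assuming the left quadruple is $<_{strong}$ below the right one and their outer intervals $[A_i, D_i]_\Z$ unite into a single $\Z$-segment, one exhibits an explicit linked pair of segments, one drawn from each $\lambda_i$, whose reducible segment-induction produces a nonsplit subrepresentation of $\text{Ind}(\pi_1 \o \pi_2)$ by pushing through the Langlands quotient map. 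For the ``only if'' direction, if either of (1)--(2) fails, one argues that \emph{every} pair of segments from $\lambda_1$ and $\lambda_2$ is unlinked, hence commutes under induction, whence $\text{Ind}(\lambda_1 \o \lambda_2) \cong \text{Ind}(\lambda_2 \o \lambda_1)$; a standard argument on Langlands quotients then upgrades this to $\text{Ind}(\pi_1 \o \pi_2) \cong \text{Ind}(\pi_2 \o \pi_1)$, and the resulting symmetry of the Jacquet module (computed via the geometric lemma on the Speh tensor product) forces irreducibility.

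The main obstacle, I expect, is the combinatorial equivalence between the existence of a linked pair among the collections of segments $\{[A_1+j, B_1+j]^{(\rho)}\}_{j=0}^{C_1-A_1}$ and $\{[A_2+k, B_2+k]^{(\rho)}\}_{k=0}^{C_2-A_2}$ on the one hand, and the clean conditions (1)--(2) on the quadruples $(A_i, B_i, C_i, D_i)$ on the other. One direction is a straightforward unpacking of the $<_{strong}$ relation together with the $\Z$-segment hypothesis, but the reverse direction requires ruling out all spurious linkages among the many segments available inside each standard module, which produces a nontrivial case analysis. Once this combinatorial equivalence is settled, both directions of the theorem follow from the segment reducibility criterion and the standard properties of the Langlands classification for $GL(n,\C A)$.
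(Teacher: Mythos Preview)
Your central claim---that reducibility of $\pi_1\times\pi_2$ is equivalent to the existence of a linked pair of segments, one from the standard module $\lambda_1$ and one from $\lambda_2$---is false, and this breaks both directions of your argument.

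For a concrete counterexample, take $\rho$ cuspidal and
\[
\pi_1=u_{ess}\left(\begin{matrix}1&9\\8&16\end{matrix}\right)^{(\rho)},\qquad
\pi_2=u_{ess}\left(\begin{matrix}2&5\\6&9\end{matrix}\right)^{(\rho)}.
\]
Here $[A_2,D_2]=[2,9]\subset[1,16]=[A_1,D_1]$, and since $B_1=9>5=B_2$ while $A_1<A_2$, neither $<_{strong}$ inequality holds; by the theorem $\pi_1\times\pi_2$ is irreducible. Yet the segment $[2,5]^{(\rho)}$ from $\lambda_2$ and $[6,14]^{(\rho)}$ from $\lambda_1$ are linked. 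So your ``only if'' step (``conditions fail $\Rightarrow$ no linked pair'') is simply wrong, and your ``if'' step is equally problematic: a linked pair forces $\lambda_1\times\lambda_2$ to be reducible, but the Langlands quotient map can and does kill those extra constituents---the standard modules $\lambda_i$ are essentially always reducible, while $\pi_1\times\pi_2$ need not be. There is no general mechanism for ``pushing a nonsplit subrepresentation through the Langlands quotient map'' as you describe.

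This is precisely the difficulty the paper is organized around. The paper does \emph{not} reduce to the segment criterion \eqref{seg-red}. Instead it separates into two cases according to whether the underlying cuspidal supports $[A_1,D_1]^{(\rho)}$ and $[A_2,D_2]^{(\rho)}$ are linked or one contains the other. In the linked case, reducibility is obtained by showing $(a_1+a_2)^t\ne a_1^t+a_2^t$ via an explicit run of the M\oe glin--Waldspurger algorithm (Proposition~5.1). In the containment case---exactly the situation of the counterexample above---irreducibility is the hard part: when the representations are not crossed one invokes the Badulescu--Lapid--M\'inguez criterion \eqref{BLM''}, and when they \emph{are} crossed one proves $(a_1+a_2)^t=a_1^t+a_2^t$ together with the implication $b<a_1+a_2\Rightarrow b^t\not<(a_1+a_2)^t$, which forces irreducibility by the argument in~3.2 (Proposition~6.3). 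None of this is accessible from the segment criterion alone; the Zelevinsky involution and its combinatorial description are essential.
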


The proof of the above criterion\footnote{Which is an obvious generalization of the criterion for irreducibility of \eqref{seg-red}, since there $A_i=C_i$ and $B_i=D_i$}
 is rather  elementary, and we shall comment it very briefly below. It is based on a simple and natural irreducibility criterion of I. Badulescu, E. Lapid and A. M\'inguez obtained in \cite{BLM} (recalled in the subsection 3.4 of this paper), and simple  combinatorial algorithm obtained by C. M\oe glin and J.-L. Waldspurger in \cite{MoeW-alg}  describing the Zelevinsky involution $a\mapsto a^t$ on the multi sets of segments of cuspidal representations (recalled in the subsection 2.6 of this paper).

All the reducibility's which show up in the above theorem are direct consequence of the fact that in this case $(a_1+a_2)^t\ne a_1^t+a_2^t$ for  multi sets of segments of cuspidal representations that parameterize corresponding essentially Speh representations (see Proposition 5.1).

The irreducibility which show up in the above theorem is obtained in two ways. One is direct application of criterion of I. Badulescu, E. Lapid and A. M\'inguez. If  this criterion does not imply irreducibility, then we show that $(a_1+a_2)^t= a_1^t+a_2^t$ for  multi sets of segments of cuspidal representations that parameterize corresponding essentially Speh representations (Lemma 6.1), and that cannot happen $b<a_1+a_2$ and $b^t<(a_1+a_2)^t$ for any multi set $b$ of segments of cuspidal representations (see the proof of Proposition 6.3)\footnote{Here $<$ is a natural ordering on multi sets of segments of cuspidal representations (see 2.3).}. This easily implies  irreducibility (see 3.2). This method of proving irreducibility was used by I. Badulescu in \cite{Ba-Sp}.

The above theorem easily implies the following

\begin{theorem} Suppose that we have essentially Speh representations $\pi_1,\dots,\pi_k$. Then 
$$
\pi_1\t\dots\t\pi_k
$$
is irreducible if and only if  the representations
$$
\pi_i\t\pi_j
$$
are irreducible for all $1\leq i<j\leq l.$
\qed
\end{theorem}

The problem of reducibility that we study in this paper was studied by C. M\oe glin and J.-L. Waldspurger in the case when $\C A$ is a field. They have proved one implication stated in Theorem 1.1    (see Lemma I.6.3 of \cite{MoeW-GL}; this is harder implication). 
The proof of irreducibility in  \cite{MoeW-GL}  is based on analytic properties of standard integral intertwining operators, normalized 
 by $L$-functions and $\epsilon$-factors.
 The paper  \cite{MoeW-GL} essentially contains (for the field case) a proof Theorem 1.2 using machinery of intertwining operators (see I.8 and  Proposition I.9 of \cite{MoeW-GL}). In our proof of Theorem 1.2, it follows easily and completely elementary from Theorem 1.1.

Further,  a specialization of  Theorem 1 of B. Leclerc, M. Nazarov and J.-Y. Thibon from \cite{LNT} (which addresses Hecke algebra representations) to the case of unramified representations of
general linear groups over a non-archimedean local field $F$, implies our result for unramified essentially Speh representations (their unramified result is more general). 
The theory of types for general linear groups over division algebras, developed in \cite{Se1} - \cite{SeSt4}, together with the theory of covers from \cite{BK}, should relatively easily imply  that Theorem \ref{LNT-Th} extends in a natural way also to the general case, but we have not checked all details of the implication (one can find in \cite{Se} and \cite{BHLS}  such type of applications of \cite{Se1} - \cite{SeSt4} and \cite{BK}). 
This way of proving the irreducibility criterion is  technically very complicated (already in the unramified case, where it uses \cite{L}). Since the claim of our main result does not include types, it is interesting to have a proof of it which does not use types (in particular, if it is relatively simple).

Discussions with I. Badulescu, E. Lapid and C. Moeglin were helpful in the course of preparation of this paper.
 C. Jantzen's numerous corrections helped us a lot to  improve the style of the paper.
  A. M\'inguez has explained us  how to  get alternative proof based on Jacquet modules of the main result of section 6  (by simple use of Lemma 1.2 from \cite{BLM}; see Remark \ref{rem-Min} of this paper for a few more details).  
  We are thankful to all them.

The content of the paper is as follows. In the second section we recall  notation for general linear groups that we use in this paper. The third section recalls  some very simple criteria for reducibility or irreducibility of parabolically induced representations. In the fourth section we consider  relations between segments defining essentially Speh representations. In the fifth section we prove the reducibility criterion for two essentially Speh representations  in the case when the underlying sets of cuspidal supports are linked, while in the sixth section we prove the criterion when they are not linked. The seventh section gives two formulations of the criterion  which we have proved in the previous sections. Here we also address the case of several  essentially Speh representations.   The eight section clarifies the relation with the work of C. M\oe glin and J.-L. Waldspurger, while the last section  clarifies the relation with the work of B. Leclerc, M. Nazarov and J.-Y. Thibon.

\section{Notation}\label{notation}

We  recall some notation for general linear groups in the non-archimedean case, following mainly \cite{T-div-a}, \cite{Ro} and \cite{Z}. 

\subsection{$\Z$-segments in $\R$} By a $\mathbb Z$-segment in $\mathbb R$, we mean a set of the form
$$
\{x,x+1,\dots,x+n\},
$$
where $x\in\mathbb R$ and $n\in\Z_{\geq 0}$. We  denote the above set by
$$
[x,x+n]_\Z,
$$
or later on, simply by
$$
[x,x+n]
$$
 to shorten notation (this will not cause confusion since we shall not deal with intervals  of real numbers in this paper).
Then $x$ is called the beginning of $\D$, and denoted by $b(\D)$, and $x+n$ is called the end of $\D$, and denoted by $e(\D)$.
We  denote the set of all $\Z$-segments in $\R$ by $\mathcal S(\R)$. For $[x,y]_\Z \in \mathcal S(\R)$, let
$$
[x,y]_\Z^-=[x,y-1]_\Z,
$$
$$
^-[x,y]_\Z=[x+1,y]_\Z
$$
if $x<y$. Otherwise, we take $[x,x]_\Z^-=\, ^-[x,x]_\Z=\emptyset$. 

For $n\in\Z_{>0}$, let
$$
\D[n]=[-(n-1)/2,(n-1)/2]_\Z.
$$

Segments $\D_1,\D_2 \in \mathcal S(\mathbb R)$ are called linked if $\D_1\cup\D_2\in\SR$ 
and $\D_1\cup\D_2\not\in\{\D_1,\D_2\}$. If the segments $\D_1$ and $\D_2$ are linked and if $\D_1$ and $\D_1\cup\D_2$ have the same beginnings, we say that 
$\D_1$ precedes $\D_2$, and write
$$
\D_1\rightarrow \D_2.
$$ 
For $\D\in\SR$ and $x\in\R$, let
$$
\D_x:=\{x+y;y\in\D\}\in\SR.
$$

For a set $X$, the set of all finite multisets in $X$ is denoted by $M(X)$ (we can view each multiset as a functions $X\rightarrow \Z_{\geq0}$ with finite support; note that  finite subsets correspond to all functions $X\rightarrow \{0,1\}$ with finite support).
Elements of $M(X)$ are denoted by $(x_1,\dots,x_n)$ (repetitions of elements can occur, and the  multiset does not change  if we permute $x_i$'s). The number 
$$
n
$$
 is called the cardinality of $(x_1,\dots,x_n)$. We  call 
$$
\{x_1,\dots,x_n\}
$$
the underlying set of $(x_1,\dots,x_n)$.

The set $M(X)$ has a natural structure of a commutative associative semi group with zero. The operation is denoted additively: 
$$
(x_1,\dots,x_n)+(y_1,\dots,y_m)=(x_1,\dots,x_n,y_1,\dots,y_m).
$$

Take positive integers $n$ and $d$. Let
\begin{equation}
\label{and}
a(n,d)=(\D[d]_{-\frac{n-1}2},\D[d]_{-\frac{n-1}2+1}, \dots,\D[d]_{\frac{n-1}2}) \in M(\SR).
\end{equation}

\subsection{Groups and representations}

Let $F$ be a non-archimedean locally compact non-discrete field and $|\ \ |_F$ its modulus character. 
Fix
 a finite dimensional central   division algebra $\mathcal A$ over $F$ of rank $d_{\mathcal A}$.
Denote by $\text{Mat} (n\t n,\mathcal A)$  the algebra of all $n\t n$ matrices with
entries in $\mathcal A$. Then $GL(n,\mathcal A)$ is the group of invertible matrices  with the
natural topology. The commutator subgroup is denoted by $SL(n,\mathcal A)$. 
Denote by 
$$
\text{det}: GL(n,\mathcal A) \ra GL(1,\mathcal A)/SL(1,\mathcal A)
$$
the determinant homomorphism, as defined  by J. Dieudonn\'e (for $n=1$ this
is just the quotient map). The kernel is $SL(n,\mathcal A)$.
Denote the reduced norm of $\text{Mat} (n\t n,\mathcal A)$ 
by r.n.$_{_{\text{Mat} (n\t n,\mathcal A)/F}}$.
We  identify characters of $GL(n,\mathcal A)$ with characters of $F^\times$
using r.n.$_{_{\text{Mat} (n\t n,\mathcal A)/F}}$. 
 Let
$$
\nu=|\text{r.n.}_{_{\text{Mat} (n\t n,\mathcal A)/F}}|_F: GL(n,\mathcal A) \ra \mathbb R^\times.
$$
Denote by
$$
G_n
$$
the general linear groups $GL(n,\mathcal A)
$ for $n\geq 0$ (we take $G_0$  to be the trivial group; we consider it formally  as the group of $0\times 0$ matrices).
 The category of all smooth representations of $G_n$ is denoted by Alg($G_n)$. The set of all equivalence classes of irreducible smooth representations of $G_n$ is denoted by $\tilde G_n$. The subset of unitarizable classes in $\tilde G_n$ is denoted by $\hat G_n$.
The Grothendieck group of the category of all smooth representations of $G_n$ of finite length is denoted by $R_n$. It is a free $\mathbb Z$-module with basis $\tilde G_n$.
The set of all finite sums in $R_n$ of elements of the basis $\tilde G_n$ is denoted by $(R_n)_+$. Set
$$
\aligned
&\text{Irr}=\cup_{n\in \mathbb Z_{\geq0}} \tilde G_n,
\\
&\text{Irr}^u=\cup_{n\in \mathbb Z_{\geq0}} \hat G_n,
\\
&R=\oplus_{n\in \mathbb Z_{\geq0}} R_n,
\\
&R_+=\sum_{n\in \mathbb Z_{\geq0}} (R_n)_+.
\endaligned
$$
The ordering on $R$ is defined by $r_1\leq r_2 \iff r_2-r_1\in R_+$.

The set of cuspidal classes in $\tilde G_n$ is denoted by $\mathcal C(G_n)$. Denote
$$
\aligned
&\mathcal C=\cup_{n\in \mathbb Z_{\geq1}} \mathcal C(G_n),
\\
&\mathcal C^u=\mathcal C\cap \text{Irr}^u.
\endaligned
$$

 Set  
$$
M_{(n_1,n_2)}:=\left\{
\left[
\begin{matrix}
g_1 & *
\\
0 & g_2
\end{matrix}
\right]
;
g_i\in G_{i}
\right\} \subseteq G_{n_1+n_2}.
$$
Let $\sigma_1$ and $\sigma_2$ be smooth representations of $G_{n_1}$ and $G_{n_2}$, respectively.
Consider $\s_1\o\s_2$ as a representation of $M_{(n_1,n_2)}$:
$$
\left[
\begin{matrix}
g_1 & *
\\
0 & g_2
\end{matrix}
\right]
\mapsto
\s_1(g_1)\o\s_2(g_2).
$$
Denote by 
$$
\s_1\t\s_2
$$
the representation of $G_{n_1+n_2}$ parabolically induced by $\s_1\o\s_2$ from $M_{(n_1,n_2)}$ (the induction that we consider here is smooth and normalized). Then for three representations, we have
\begin{equation}
\label{asso}
(\s_1\t\s_2)\t\s_3\cong \s_1\t(\s_2\t\s_3).
\end{equation}
Since the induction functor is exact, we can lift it in a natural way to a $\mathbb Z$-bilinear mapping $\t:R_{n_1}\t R_{n_2}\rightarrow R_{n_1+n_2}$, and further to $\t : R\t R\rightarrow R$. In this way $R$ becomes graded commutative ring.

For $\rho\in \mathcal C$ denote by 
$$
s_\rho
$$
the minimal non-negative number such that 
$
\rho\t \nu^{s_\rho}\rho
$
 reduces. Then $s_\rho\in \Z_{\geq 1}$, and it divides $d_{\mathcal A}$ (it can be described in terms of Jacquet-Langlands correspondence established in \cite{DKV}). Put
$$
\nu_\rho:=\nu^{s_\rho}.
$$

\subsection{Segments in cuspidal representations $\mathcal C$} Let $\D\in \SR$ and $\rho\in\mathcal C$. Set
$$
\D^{(\rho)}:=\{\nu_\rho^x\rho;x\in\D\}.
$$
The set $\D^{(\rho)}$ is called a segment in $\mathcal C$. Once  we fix $\rho$, then we call elements in $\D$ the exponents of elements in $\D^{(\rho)}$.  Then,  when we work with $\D^{(\rho)}$, we often drop the superscript $(\rho)$, and instead  of $\D^{(\rho)}$ and its elements, we refer   simply  to $\D$ and its elements.

The set of all segments in $\mathcal C$ is denoted by $\SC$.
We take $\emptyset^{(\rho)}=\emptyset$.
For $\D^{(\rho)}\in\SC$, where $\D\in\SR$ and $\rho\in\mathcal C$, we define
$$
\aligned
&(\D^{(\rho)})^-:= (\D^-)^{(\rho)},
\\
&^-(\D^{(\rho)}):=(\,^-\D)^{(\rho)}.
\endaligned
$$
For two segments $\G_1,\G_2\in\SC$, we say that they are linked if there exist linked segments $\D_1,\D_2$ in $\SR$ and $\rho\in\mathcal C$ such that
$$
\G_i=\D_i^{(\rho)},\quad i=1,2.
$$
In that case we say that $\G_1$ precedes $\G_2$ if $\D_1$ precedes $\D_2$, and  we then write
$$
\G_1\rightarrow \G_2.
$$

For $a=(\D_1,\dots,\D_n)\in M(\SR)$ and $\rho\in \mathcal C$, set
$$
a^{(\rho)}:=(\D_1^{(\rho)},\dots,\D_n^{(\rho)})\in M(\SC).
$$

Let  $b=(\G_1,\dots,\G_n)\in M(\SC)$ and suppose that $\G_i$ and $\G_j$ are linked for some $1\leq i<j\leq n$. Denote by $c$ the multiset that we get by replacing $\G_i$ and $\G_j$ by $\G_i\cup \G_j$ and $\G_i\cap \G_j$ in $b$ (we omit $\emptyset$ if if $\G_i\cap \G_j=\emptyset$). Then we write
$$
c\prec b.
$$
For $b_1,b_2\in M(\SC)$ we write $b_1\leq b_2$ if $b_1=b_2$, or if there exist $c_1,\dots,c_k\in M(\SC)$, with $k\geq 2$ such that
$$
b_1=c_1\prec c_2\prec\dots\prec c_k=b_2.
$$
Then $\leq$ is an ordering on $M(\SC)$.

For $\G\in\SC$ we define $\text{supp}(\G)$ to be $\G$, but considered as an element of $M(\mathcal C$). For $a=(\G_1,\dots,\G_n)\in M(\SC)$ we define
$$
\text{supp}(a)=\sum_{i=0}^n \text{supp}(\G_i)\in M(\mathcal C).
$$

The contragredient representation of $\pi$ is denoted by $\tilde \pi$. For $\D\in \SC$, set $\tilde \D:=\{\tilde \rho;\rho\in \D\}$. If $a=(\D_1,\dots,\D_k)\in M(\SC)$, then we put
$$
\tilde a=(\tilde \D_1,\dots,\tilde \D_k).
$$

\subsection{Classifications of non-unitary duals} 

Let $\D=\{\rho,\nu_\rho\rho, \dots,\nu_\rho^n\rho\}\in\SC.$
Then the representation
$$
\rho\t\nu_\rho^\rho\t \dots\t\nu_\rho^n\rho
$$
has a unique irreducible subrepresentation, which is denoted by 
$$
\z(\D),
$$
 and a unique irreducible quotient, which is denoted by 
 $$
 \d(\D).
$$

Let $a=(\D_1,\dots,\D_n)\in M(\SC)$. 
We choose an  enumeration of $\D_i$'s 
such that for all $i,j\in \{1,2,\dots,n\}$ the following  holds:

\begin{center}
if $\D_{i}\rightarrow \D_{j} $,  
then $j<i$.
\end{center}

Then the representations
$$
\aligned
&\zeta(a):= \z(\D_1)\t \z(\D_2)\t\dots\t \z(\D_n),
\\
& \lambda(a):= \d(\D_1)\t \d(\D_2)\t\dots\t \d(\D_n)
\endaligned
$$
are determined by $a$ up to an isomorphism (i.e., their isomorphism classes do not depend on the enumeration which satisfies the above condition). The representation $\zeta(a)$ has a unique irreducible subrepresentation, which is denoted by 
$$
Z(a),
$$
 while the representation $\lambda(a)$ has a unique irreducible quotient, which is denoted by 
 $$
 L(a).
 $$
  In this way we  obtain mappings
$$
Z,L:M(\SC)\rightarrow \text{Irr},
$$
which are bijections. Here, $Z$ is called Zelevinsky classification of Irr, while $L$ is called Langlands classification of Irr.
We have followed above the presentation of these classifications given by F. Rodier in \cite{Ro} in the case when $\C A$ is a field. One can find the case of non-commutative $\C A$ in \cite{T-div-a} and \cite{MS}. In \cite{T-div-a}, there is only the case of Langlands classification, while in \cite{MS} are both classifications (proofs in \cite{MS} are completely local). 

For contragredient representations, we have
$$
L(a)\tilde{\ }=L(\tilde a) \text{ \ and \ } Z(a)\tilde{\ }=Z(\tilde a).
$$

Denote by $ \DS$ the set of all essentially square integrable modulo center classes in Irr$\backslash \hat G_0$, and by $ \DS^u$ the subset of all unitarizable classes in $ \DS$ (i.e., those  having  unitary central character).
The mapping
\begin{equation}
\label{esi}
(\rho,n)\mapsto \d(\D[n]^{(\rho)}), \quad \mathcal C\t \Z_{\geq1}\ra  \DS
\end{equation}
is a bijection.

 If $\d= \d(\D[n]^{(\rho)})\in \DS$, then we denote
$$
\nu_\d=\nu_\rho
$$
(we could define $\nu_\d$ in the same way as $\nu_\rho$).

For $\d\in  \DS$ define $\d^u\in \DS^u$ and $e(\d)\in\R$ by the following requirement:
$$
\d=\nu^{e(\d)}_\d\d^u.
$$
Let $\d\in M( \DS)$. We can choose an enumeration of elements of $d$ which satisfies:
$$
e(\d_1)\geq e(\d_2) \geq \dots \geq e(\d_n).
$$
Let
$$
\lambda(d)=\d_1\t\d_2\t\dots\t\d_n.
$$
Then the representation $\lambda(d)$ has a unique irreducible quotient, denoted by $L(d)$. Again $L: M( \DS) \ra \text{Irr}$ is a bijection, and it is one of the possible ways to express the Langlands classification in this case.

The representations
$$
u(\d,n)=L((\nu_\d^{\frac{n-1}2}\d, \nu_\d^{\frac{n-1}2-1}\d,\dots,\nu_\d^{-\frac{n-1}2}\d)),\ \ \ \d\in  \DS,
$$
 are essentially unitarizable (i.e., they become unitarizable after a twist by the appropriate character; see \cite{BR-Tad} and \cite{BHLS}).

\subsection{Duality - Zelevinsky involution} Define a mapping
$$
^t:\text{Irr} \ra \text{Irr}
$$
by $Z(a)^t=L(a), a \in M(\SC)$. Extend $^t$ additively to $R$. Clearly, $^t$ is a positive mapping, i.e., satisfies: $r_1\leq r_2\implies r_1^t\leq r_2^t$. A non-trivial fact is that $^t$ is also multiplicative, i.e., a ring homomorphism (see \cite{Au}, \cite{Ro} and \cite{ScSt}). Further, $^t$ is an involution. For $ a \in M(\SC)$ we define $a^t\in M(\SC)$ by the requirement
$$
(L(a))^t=L(a^t).
$$
We could also use the Zelevinsky classification to define $^t:M(\SC)\ra M(\SC)$, and we would get the same involutive mapping.

One can find more information about the involution in \cite{Ro}.

\subsection{Algorithm of C. M\oe glin and J.-L. Waldspurger} 
Let $a\in M(\SR)$ and $\rho\in\SC$. Then there exists $a^t\in M(\SR)$, independent of $\rho$, such that
$$
(a^{(\rho)})^t= (a^t)^{(\rho)}.
$$
Now we recall  the combinatorial algorithm from \cite{MoeW-alg} describing $a^t$.

Consider segments $\D$  in $a$ with maximal $e(\D)$. Among these segments, choose one with maximal $b(\D)$. Denote it by $\D_1$, and denote its end by $x$. This will be called the first stage of the algorithm.

For the following stage,  consider segments $\D$ in $a$  which end at $x-1$, and which are linked with $\D_1$ (i.e. which precede $\D_1$). Among them, if  such segments exist, choose one with maximal $b(\D)$. Denote it by $\D_2$.

One continues this procedure with ends $x-2$, $x-3$, etc., as long as it is possible. The segments considered in this procedure are $\D_1, \dots, \D_k$ ($k\geq 1$).
Let 
$$
\G_1=[x-k+1,x]\in  M(\Z).
$$
This set of stages of the algorithm will be called the first step of the algorithm.

 Let $a^\la$ be the multiset of  $M(\SR)$ which we get from $a$ by replacing each $\D_i$ by $\D_i^-$, $i=1,\dots ,k$ (we simply omit those $\D_i^-$ for which  $\D_i^-=\emptyset$).
 If $a^\la$  is non-empty, we now repeat  the above procedure with $a^\la$. In this way we get a segment $\G_2$ and $(a^\la)^\la\in M(\SR)$.

Continuing this procedure as long as possible, we get $\G_1,\dots,\G_m\in \mathcal S(\R)$. Then by \cite{MoeW-alg} (see also \cite{BR-inv}) we have 
$$
a^t= (\G_1,\dots,\G_m).
$$
This algorithm will be denoted by
$$
\text{MWA}^{\la}.
$$
\begin{definition}
The set of stages of the algorithm, which end with some  segment $\G_i$, will be called a step of the algorithm.
\end{definition}

\begin{remark} We shall often use the following simple facts in our applications of the algorithm:
\label{rem-alg}
\begin{enumerate}
\item If at same stage of the algorithm we have used a segment $[X,Y]$, and if at this stage we have at disposal  at least one copy of the segment $[X,Y]_{-1}$, then in the  following stage we must use  one copy of the segment $[X,Y]_{-1}$, and both stages are part of the same step.

\item If at same stage of the algorithm we have used segment $[X,Y]$ with the property that $X$ is $\leq$  the beginnings of all the segments that we have at disposal at this stage, then this is the last stage of the step that we perform.

\item If at same stage of the algorithm we have used a segment $[X,Y]$, then each segment that we will  use in the remaining stages of this step must have cardinality 
at least $Y-X+1$.

\end{enumerate}
\end{remark}

\subsection{Dual algorithm}
Extend the mapping given by $\pi\mapsto \tilde \pi$ on the equivalence classes of irreducible representations to an a homomorphism of $R$ as an additive group (it is also a ring homomorphism). We denote this map (again) by  $\tilde{\ }:R \ra R$. Then it is a ring homomorphism. One directly sees  on generators $\d(\D)$, $\D\in \SC$, that this homomorphism commutes with the Zelevinsky involution. Thus
$$
(L(a)\tilde{\ })^t =(L(a)^t)\tilde{\ }, \ a\in M(\SC).
$$
Therefore, for $a\in M(\SR) $ and $\rho \in \mathcal C$, we can apply the above algorithm to $L(a^{(\rho)})\tilde{\ }$, and after that apply once again the contragredient mapping. Since the contragredient mapping is an involution, we get the Zelevinsky involution. This gives the following (dual) version of the above algorithm. 

Consider segments $\D$ in $a$ with minimal $b(\D)$. Among them,  pick a segment with minimal $e(\D)$. Denote it by $\D_1$, and its beginning with $x$. Now consider segments $\D$  in $a$ which begin by $x+1$ and which are linked with $\D_1$, if any such segment exists. Among these segments choose one with minimal $e(\D)$. Denote it by $\D_2$.
One continues this procedure with beginnings $x+2$, $x+3$, etc., as long as it is possible. The segments that have shown up  in this procedure are denoted by $\D_1, \dots, \D_k$ ($k\geq 1$).
Put 
$$
\G_1=[x,x+k-1]\in  M(\Z).
$$
Let $^\ra a$ be the multiset of  $M(\SR)$ which we get from $a$ by replacing each $\D_i$ by $^-\D_i$ (if $^-\D_i=\emptyset$, we simply omit it).
 If $^\ra a$  is non-empty, we repeat  the above procedure with $^\ra a$. In this way we get $\G_2$ and $^\ra(^\ra a)$.
Continuing this procedure as long as possible, we get $\G_1,\dots,\G_m\in \mathcal S(\R)$. Then 
$$
a^t= (\G_1,\dots,\G_m).
$$
This algorithm will be denoted by
$$
^{\ra}\text{MWA}.
$$

We shall usually apply the above algorithm(s) to elements of $M(\SC)$ in an obvious way. It is easy to show  that
$$
(a(n,d)^{(\rho)})^t=a(d,n)^{(\rho)}.
$$

\subsection{Upper bound for  the lengths of the segments in the dual  multisegment}
\label{est}
 We will later use the following observation of  C. M\oe glin and J.-L. Waldspurger from \cite{MoeW-alg} (this is  remark (P) before Theorem 1
 in \cite{Ba-Sp}): if there exists a segment $\D$ of length $m$ such that all the ends of segments in $a\in M(\SC)$ are contained in $\D$, then the length of segments in $a^t$ can be at most $m$.

Dually, we get the following observation:  If there is a segment $\D$ of length $m$ such that all the beginnings of segments in $a\in M(\SC)$ are contained in $\D$, then the length of segments in $a^t$ is at most $m$.

\subsection{Support} Let $\pi\in \text{Irr}$. Take $a\in M(\SC) $ such that $\pi=L(a)$. Then the support of $\pi$ is defined by
$$
\text{supp}(\pi)=\text{supp}(a).
$$
If we take 
 $b\in M(\SC) $ such that $\pi=Z(a)$, then $
\text{supp}(\pi)=\text{supp}(b).
$

Suppose that for a finite length representation $\pi'$, for each two irreducible subquotients $\pi_1$ and $\pi_2$ one has       $\text{supp}(\pi_1)=\text{supp}(\pi_2).$ Then we define $\text{supp}(\pi')$ to be $\text{supp}(\pi)$, where $\pi$ is (any) irreducible subquotient of $\pi'$.

\subsection{Classification of the unitary dual} 

Denote by 
$$
B_{\text{rigid}}=\{u(\d,n); \d\in  \DS^u,n\in\Z_{\geq1}\}.
$$
 and
 $$
 B=B_{\text{rigid}}\cup\{\nu_\d^\a u(\d,n)\t\nu_\d^{-\alpha}u(\d,n); u(\d,n)\in B_{\text{rigid}}, 0<\a<1/2\}.
$$
 Then the unitary dual is described  by the following\footnote{Exactelly the same theorem classifies  the unitary dual also in the archimedean case, i.e. for general linear groups over $\mathbb R, \mathbb C$ and $\mathbb H$ (see \cite{T-R-C-new}, or earlier version \cite{T-R-C-old}, and \cite{BR-arch}).}:

\begin{theorem} ( \cite{BHLS},\cite{BR-Tad},\cite{Se},\cite{T-div-a})
\begin{enumerate}
\item Let 
$\tau_1, \ldots , \tau_n \in B$.
Then the representation 
$$
\pi:=\tau_1 \times \ldots \times \tau_n
$$
 is irreducible and unitary.

\item Suppose that a representation
$\pi'$
is obtained from
$\tau_1', \ldots , \tau_{n'}' \in B$
in the same manner as
$\pi$
was obtained from
$\tau_1, \ldots , \tau_n$
in (1). Then
$\pi \cong \pi'$
if and only if $n=n'$ and the sequences
$(\tau_1, \ldots , \tau_n)$
and
$(\tau_1', \ldots , \tau_n')$
coincide after a renumeration.

\item Each irreducible  unitary representation of $GL(m,F)$, for
any $m$, can be obtained as in (1). 
\end{enumerate}
\end{theorem}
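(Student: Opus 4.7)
The plan is to prove the three assertions separately, following the Tadić-style strategy for the classification of the unitary dual, with Theorems 1.1 and 1.2 of this paper supplying the irreducibility inputs; the exhaustion step will rely on deeper techniques imported from \cite{BHLS}, \cite{BR-Tad}, \cite{Se} and \cite{T-div-a}. Since this statement is cited rather than re-proved in the present paper, my sketch organizes the historical route in a way that uses the new results above whenever possible.

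For assertion (1), I would take as input the essential unitarizability of each rigid building block $u(\delta,n)$, which is the content of \cite{BR-Tad}. For the non-rigid elements $\nu_\delta^{\alpha}u(\delta,n)\times \nu_\delta^{-\alpha}u(\delta,n)$ with $0<\alpha<1/2$, irreducibility follows by direct application of Theorem 1.1: in that range neither the $\mathbb Z$-segment-union condition nor the strict strong ordering of the associated matrix parameters is satisfied, while at $\alpha=1/2$ the strong ordering becomes strict and reducibility first appears. Unitarity on the interval $0<\alpha<1/2$ then follows by analytically deforming the invariant Hermitian form from the unitarily induced representation at $\alpha=0$, which remains non-degenerate and definite until that first reducibility point. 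To pass from the building blocks in $B$ to an arbitrary product $\tau_1\times\dots\times\tau_n$, I would invoke Theorem 1.2, reducing irreducibility of the full product to pairwise irreducibility $\tau_i\times\tau_j$; each such pair decomposes into pairs of essentially Speh constituents, which are handled by Theorem 1.1. Unitarity of the product is then automatic since it is unitarily induced from an irreducible unitary representation.

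Assertion (2) follows from the Langlands classification recalled in subsection 2.4: the multiset of essentially square integrable factors appearing in the Langlands data of $\pi$ is a complete invariant, and from it one recovers the multiset $(\tau_1,\dots,\tau_n)$ uniquely by grouping the factors into elements of $B_{\mathrm{rigid}}$ and into complementary-series pairs. This grouping is forced because the central exponents $e(\delta)$ of the essentially square integrable pieces are preserved by the association and the pairing of opposite exponents in a complementary series is unambiguous.

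The principal obstacle will be assertion (3), the exhaustion claim that every irreducible unitary representation arises from $B$. The standard strategy is induction on rank along ends of complementary series: given $\pi\in\hat G_m$, inspect its Langlands data, isolate a maximal block of the form $u(\delta,n)$ or a complementary-series pair that can be peeled off, and verify that the remainder is again unitary. The key input is that at the first reducibility point the induced representation contains a distinguished unitary composition factor while the non-unitary alternative is excluded, an analysis that rests on the Hermitian-form deformation argument together with a Kirillov-type injectivity statement, and finally on Jacquet--Langlands transfer from $GL(n,F)$ to $GL(n,\mathcal{A})$ in the non-commutative case. These ingredients are precisely what the citations \cite{T-div-a}, \cite{Se} and \cite{BHLS} provide, and I do not see a route to exhaustion using only the combinatorial framework developed in the present paper.
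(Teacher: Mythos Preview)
The paper does not supply a proof of this theorem: it appears in the background Section 2.10 with citations to \cite{BHLS}, \cite{BR-Tad}, \cite{Se}, \cite{T-div-a}, and is invoked only once afterward (immediately following Theorem 7.5) to translate the paper's combinatorial criterion into the abstract language of the introduction. You correctly identify this, so there is no ``paper's own proof'' to compare against.

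Your sketch is a reasonable reconstruction, and the idea of replacing the classical irreducibility inputs in part (1) by Theorems 1.1 and 1.2 of the present paper is logically permissible: those theorems are proved in Sections 5--7 without any appeal to the unitary-dual classification, so no circularity arises. However, you still owe a verification that every pair of essentially Speh constituents drawn from elements of $B$ falls on the \emph{irreducibility} side of Theorem 1.1; you assert this but do not check it. The check is short once one notices the following: any such constituent, written in the integer parameters of Theorem 7.2, has $A+D=B+C$ equal to $0$ (rigid case) or to $2\alpha$ with $0<|\alpha|<\tfrac12$ (a half of a complementary-series pair). A rigid constituent and a non-rigid half are never on the same cuspidal $\mathbb Z$-line, since that would force $\alpha\in\tfrac12\mathbb Z$. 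For two constituents that \emph{are} on the same $\mathbb Z$-line, the integers $A_i+D_i$ therefore differ by at most $1$; but the strong ordering $M_1<_{strong}M_2$ forces $A_1\le A_2-1$ and $D_1\le D_2-1$, hence $(A_1+D_1)\le(A_2+D_2)-2$, a contradiction. This is the missing paragraph in your argument for (1). Your treatment of (2) via Langlands data and of (3) by deferring to the cited references is appropriate; exhaustion genuinely lies outside the combinatorial methods of this paper.
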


Recall that
$$
u(\D[d]^{(\rho)},n)= L(a(d,n)^{(\rho)}),
$$
and
\begin{equation}
\label{t}
Z(a(n,d)^{(\rho)})\cong L(a(d,n)^{(\rho)})
\end{equation}
 for $n,d\in\Z_{\geq1}$ and $\rho\in\mathcal C$.

\section{Some criteria for reducibility and irreducibility}

\subsection{A reducibility criterion} Let $a,b\in M( \DS)$. We know 
$$
L(a+b)\leq L(a)\t L(b).
$$
Thus
$$
L((a+b)^t)=L(a+b)^t\leq L(a)^t\t L(b)^t.
$$
 Also 
 $$
 L(a^t+b^t)\leq L(a^t)\t L(b^t)=L(a)^t\t L(b)^t.
 $$
This implies the following well known reducibility criterion
\begin{equation}
\label{RC}
 \text{If $(a+b)^t\ne a^t+b^t$, then $ L(a)\t L(b)$ reduces}.
\end{equation}

In general,   $(a+b)^t= a^t+b^t$ does not imply the irreducibility of $ L(a)\t L(b)$.

\subsection{Irreducibility criterion of I. Badulescu} Suppose that $a_1,a_2\in M(\SC)$ satisfy $(a_1+a_2)^t=a_1^t+a_2^t$ (if this is not the case, then  $L(a_1)\t L(a_2)$ is reducible). Assume that for each $b\in M(\SC)$ the following implication  holds 
\begin{equation}
\label{BRC}
b< a_1+a_2 \implies b^t\not < (a_1+a_2)^t.
\end{equation}
Then $L(a_1)\t L(a_2)$ is irreducible\footnote{This way of proving irreducibility was used in the proof of Theorem 1 in \cite{Ba-Sp} (as far as we know, this is the first case where this simple idea to prove irreducibility was used).}.

In general, irreducibility of $L(a_1)\t L(a_2)$ does not imply that \eqref{BRC} holds.

For the convenience of the reader, we  repeat the argument from \cite{Ba-Sp}.

Suppose that $L(a_1)\t L(a_2)$ is reducible. Then  in $R$ we have
$$
L(a_1)\t L(a_2)=L(a_1+a_2)+\sum_{i=1}^k n_i L(b_i),
$$
where $n_i\in \Z_{\geq1}$, $k\geq 1$ and 
$$
b_i< a_1+a_2, \quad i=1,\dots,k.
$$

Applying the Zelevinsky involution, we get (in $R$)
$$
L(a_1^t)\t L(a_2^t)=L(a_1^t+a_2^t)+\sum_{i=1}^k n_i 
L(b_i)^t
$$
(here we have used $(a_1+a_2)^t=a_1^t+a^t_2$). The properties of the Langlands classification imply
$$
b_1^t,\dots,b_k^t <a_1^t+a_2^t.
$$
This contradicts the assumptions of the criterion.

\subsection{Contacting and crossing} To present the next criterion, we need the following:
\begin{definition}
Let $a,b\in M( \SC)$. We say that $a$ is in contact with $b$ (or simply that $a$ contacts $b$), if there exist  segments $\D_1$ and $\D_2$ in $a$ and $b$ respectively, which are juxtaposed (two non-empty segments are called juxtaposed if they are disjoint and if their union is a segment).

We say that $a$ and $b$ are crossed if $a$ contacts $b^t$, and $a^t$ contacts $b$.

We  say that irreducible representations are in contact (resp., are crossed) if the multisegments corresponding to them with respect to the Langlands classification are in contact (resp., are crossed). 
\end{definition}

\begin{remark}
Observe that $a$ is in contact with $b$ if and only if $\tilde a$ is in contacts with $\tilde b$. Further, the action of Zelevinsky involution and the contragredient mapping commute on segments, i.e., $(\tilde \D)^t=(\D^t)\tilde{\ }$, which implies that they commute on $R$. From this, it  follows easily that
$$
(\tilde a)^t=(a^t)\tilde{\ }.
$$
Therefore, 
$$
\text{$a$ and $b$ are crossed if and only if $\tilde a$ and $\tilde b$ are crossed.}
$$

Clearly
$
L(a)\t L(b)$ reduces  $\iff  L(\tilde a)\t L(\tilde b)$ reduces $\iff L(a^t)\t L(b^t) $ reduces $ \iff L(\tilde a^t)\t L(\tilde b^t) $ reduces.

These equivalences  also hold for the Zelevinsky classification (and $L(a)\t L(b)$  reduces $\iff  Z(a)\t Z(b)$ reduces).
\end{remark}

\subsection{Irreducibility criterion of I. Badulescu, E. Lapid and A. M\'inguez}
This criterion (which is Theorem 3.9 of \cite{BLM})
for $a,b \in M( \DS)$ says the following:
\begin{equation}
\label{BLM}
 \text{If $a$ is not in contact with $b$, then $ L(a)\t L(b)^t$ is irreducible}.
\end{equation}
We get directly  from this the following criterion:
\begin{equation}
\label{BLM'} 
 \text{If  $a$ and $b^t$, or $a^t$ and $b$, are not in contact, then $ L(a)\t L(b)$ is irreducible.}
\end{equation}
In other words:
\begin{equation}
\label{BLM''}
 \text{If $ L(a)\t L(b)$ is reducible, then  $a$ and $b$ are crossed}.
\end{equation}

\section{Contacts of non-induced  essentially unitarizable representations}

We  now describe when two  representations $L(a(n,d)^{(\nu^k_\rho \rho)}$ and $L(a(m,d)^{(\nu^l_\rho\rho)})$, supported on the same cuspidal line, are in contact. 

\subsection{Contacts among $L(a(n,d)^{(\rho)})$'s}
Let
$$
\pi_1=L(a(n,d)^{(\nu_{\rho'}^k\rho')}) , \qquad \pi_2= L(a(m,e)^{(\nu_{\rho'}^l\rho')}), \quad k,l\in \Z
$$
After twisting $\rho'$, we can write
$$
L(a(n,d)^{(\nu_{\rho'}^k\rho')})= L([1,d]^{(\rho)},[2,d+1]^{(\rho)}],\dots,[n,d+n-1]^{(\rho)}), \hskip10mm
$$
$$
L(a(m,e)^{(\nu_{\rho'}^l\rho')})=  \hskip90mm
$$
$$
\hskip30mm L([b,b+e-1]^{(\rho)},[b+1,b+e]^{(\rho)},\dots,[b+m-1,b+m-1+e-1]^{(\rho)}),
$$
for some $\rho\in \C C$. 

In what follows we  assume 
$$
b\in\Z
$$
(otherwise, $\pi_1\t\pi_2$ is always irreducible).

Now $\pi_1$ and $\pi_2$ are in  contact (see 3.3) if and only if
\begin{equation}
\label{condition}
[1,n]\cap [b+e,b+m-1+e]\ne\emptyset \text{ \ \ or \ \ } [d+1,d+n]\cap [b, b+m-1]\ne \emptyset.
\end{equation}

We can graphically interpret this by the following drawing:

\hskip25mm
\begin{tikzpicture} 
\draw[line width=4pt] (3,1) -- (1,3);
\path[draw]  (1,3) -- (3,3) -- (5,1) -- (3,1);
\draw[style=dashed,line width=4pt] (4,3) -- (6,1);
\draw (3,1) node[below] {$n$};
\draw (1,3) node[above] {$1$};
\draw (3,3) node[above] {$d$};
\draw (4,3) node[above] {$  d+1$};
\draw (6,1) node[below] {$\ \ \ \ d+n$};
\shade[shading=ball, ball color=black] (3,1) circle (.12);
\shade[shading=ball, ball color=black] (1,3) circle (.12);
\shade[shading=ball, ball color=black] (3,3) circle (.12);
\shade[shading=ball, ball color=black] (5,1) circle (.12);
\shade[shading=ball, ball color=black] (4,3) circle (.12);
\shade[shading=ball, ball color=black] (6,1) circle (.12);
 \end{tikzpicture}

 \hskip45mm
\begin{tikzpicture} 
\draw[style=dashed,line width=4pt] (4,1) -- (1,4);
\path[draw]  (1,4) -- (4,4) -- (7,1) -- (4,1);
\draw[line width=4pt] (5,4) -- (8,1);
\draw (4,1) node[below] {$b+m-1$};
\draw (1,4) node[above] {$b$};
\draw (4,4) node[below] {$b+e-1 \ \ \ \  \ \ \ \ \ $};
\draw (5,4) node[above] {$b+e$};
\draw (8,1) node[below] {$\  \ \ \ \ \ \ \ \  \ b+m-1+e$};
\shade[shading=ball, ball color=black] (4,1) circle (.12);
\shade[shading=ball, ball color=black] (1,4) circle (.12);
\shade[shading=ball, ball color=black] (4,4) circle (.12);
\shade[shading=ball, ball color=black] (5,4) circle (.12);
\shade[shading=ball, ball color=black] (8,1) circle (.12);
\shade[shading=ball, ball color=black] (7,1) circle (.12);
 \end{tikzpicture}

Looking at the above drawing, we have contact between $\pi_1$ and $\pi_2$
  if and only if the intersection of the (projections to the horizontal axis  of) bold lines is non-empty, or the  intersection of (projections of) dashed lines is non-empty.
 
\subsection{Some remarks regarding irreducibility} We  study when $\pi_1\t\pi_2$ reduces. Since $\pi_1\t\pi_2$ reduces if and only if $\pi_2\t\pi_1$ reduces, without lost of generality we can always enumerate $\pi_i$'s in such a way that 
$$
1\leq b.
$$ 
Since we are  interested in reducibility of $\pi_1\t \pi_2$, we  consider only the case when the union of underlying sets of supports of $\pi_1$ and $\pi_2$ is a segment (if it is not, then $\pi_1\t \pi_2$ is irreducible). Therefore, we  assume
$$
b\leq d+n.
$$
We retain the assumptions 
\begin{equation}
\label{ass}
1\leq b\leq d+n
\end{equation}
in what follows.

Observe that the first condition from \eqref{condition} for $\pi_i$'s to be in contact, $[1,n]\cap [b+e,b+m-1+e]\ne\emptyset$,  is now equivalent to
$
b+e\leq n$, $\text{   i.e., \ \ } $
$$
b+e-1< n .
$$

\subsection{Another notation}
We  denote

$
\qquad \quad A_1=1,\qquad \quad\quad\qquad\qquad B_1=d,
$

$
\qquad \qquad  C_1=n,\qquad \quad\quad\qquad\qquad D_1=n+d-1,
$

$
 \qquad \quad \qquad A_2=b,\quad\qquad \qquad  \qquad \quad B_2=b+e-1,
$

$
\qquad  \quad \quad \qquad C_2=b+m-1,\qquad\qquad  D_2=b+m-1+e-1.
$

Obviously, $A_1\leq  B_1,C_1\leq D_1$, $A_2\leq  B_2,C_2\leq D_2$, and
$$
B_1-A_1=D_1-C_1,
$$
$$
B_2-A_2=D_2-C_2.
$$
The previous  assumption, $1\leq b\leq d+n$, now   becomes
\begin{equation}
\label{ASS}
A_1
\leq 
A_2\leq D_1+1.
\end{equation}

Now we have contact if and only if
$$
[A_1,C_1]\cap [B_2+1,D_2+1]\ne\emptyset \text{ \ \ or \ \ } [B_1+1,D_1+1]\cap [A_2, C_2]\ne \emptyset.
$$

The previous drawing now corresponds to the following drawing in the new notation:

\begin{tikzpicture} 
\draw[line width=4pt] (3,1) -- (1,3);
\path[draw]  (1,3) -- (3,3) -- (5,1) -- (3,1);
\draw[style=dashed,line width=4pt] (4,3) -- (6,1);
\draw (3,1) node[below] {$C_1$};
\draw (1,3) node[above] {$A_1$};
\draw (3,3) node[above] {$B_1$};
\draw (5,1) node[below] {$D_1$};
\draw (4,3) node[above] {$  B_1+1$};
\draw (6,1) node[below] {$\ \ \ \ D_1+1$};
\shade[shading=ball, ball color=black] (3,1) circle (.12);
\shade[shading=ball, ball color=black] (1,3) circle (.12);
\shade[shading=ball, ball color=black] (3,3) circle (.12);
\shade[shading=ball, ball color=black] (5,1) circle (.12);
\shade[shading=ball, ball color=black] (4,3) circle (.12);
\shade[shading=ball, ball color=black] (6,1) circle (.12);
 \end{tikzpicture}

 \hskip20mm
\begin{tikzpicture} 
\draw[style=dashed,line width=4pt] (4,1) -- (1,4);
\path[draw]  (1,4) -- (4,4) -- (7,1) -- (4,1);
\draw[line width=4pt] (5,4) -- (8,1);
\draw (4,1) node[below] {$C_2$};
\draw (1,4) node[above] {$A_2$};
\draw (4,4) node[above] {$B_2\ \ $};
\draw (7,1) node[below] {$D_2$};
\draw (5,4) node[above] {$B_2+1$};
\draw (8,1) node[below] {$\ \ \ D_2+1$};
\shade[shading=ball, ball color=black] (4,1) circle (.12);
\shade[shading=ball, ball color=black] (1,4) circle (.12);
\shade[shading=ball, ball color=black] (4,4) circle (.12);
\shade[shading=ball, ball color=black] (5,4) circle (.12);
\shade[shading=ball, ball color=black] (8,1) circle (.12);
\shade[shading=ball, ball color=black] (7,1) circle (.12);
 \end{tikzpicture}

We have contact if and only if the (projections to the horizontal axis of) bold lines have non-empty intersection, or the dashed lines have non-empty intersection.

Since we assume $A_1\leq A_2$, the  first condition  is equivalent to
$
B_2+1\leq C_1,$
i.e.,
$
B_2< C_1 .
$
Therefore, we have contact if and only if
\begin{equation}
\label{CON}
B_2<C_1 \text{ \ \ or \ \ } [B_1+1,D_1+1]\cap [A_2, C_2]\ne \emptyset.
\end{equation}

We end this section with the following

\begin{lemma}
\label{one-condition} Suppose that the underlying set of $\text{supp}(L(a(n,d)^{(\nu_{\rho'}^k\rho')})$ precedes the underlying set of  $\text{supp}(L(a(m,e)^{(\nu_{\rho'}^l\rho')})$.  Then $\pi_1$ is in contact with $\pi_2$ if and only if the dashed segments intersect. 
\end{lemma}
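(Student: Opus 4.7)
The plan is to exploit that the contact condition \eqref{CON} is a disjunction of two conditions, which I will informally call the \emph{bold} condition ($B_2<C_1$, i.e.\ the two bold segments in the picture of 4.3 project to overlapping intervals) and the \emph{dashed} condition ($[B_1+1,D_1+1]\cap[A_2,C_2]\neq\emptyset$). The direction ``dashed $\Rightarrow$ contact'' is tautological, so to prove the lemma it suffices to show that under the precedence hypothesis the bold condition implies the dashed one; contact will then be equivalent to the dashed condition alone.

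First I would unpack the precedence hypothesis on the underlying sets of support. This gives $A_1<A_2$ and $D_1<D_2$, together with the linkage $A_2\leq D_1+1$ already recorded in \eqref{ASS}. In the concrete parameters this translates to $b\geq 2$, $b+m+e\geq n+d+2$, and $b\leq n+d$. The bold hypothesis $b+e\leq n$ then forces $e\leq n-b\leq n-2$, so in particular $n\geq e+1$.

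The main step is to verify that the dashed intersection $[B_1+1,D_1+1]\cap[A_2,C_2]\neq\emptyset$, equivalent to the simultaneous system $\max(d+1,b)\leq\min(n+d,b+m-1)$, holds automatically. Three of the four resulting comparisons are immediate: $d+1\leq n+d$ is trivial, $b\leq b+m-1$ holds since $m\geq 1$, and $b\leq n+d$ is the linkage. The only nontrivial one, $d+1\leq b+m-1$, rearranges via the precedence inequality $b+m\geq n+d+2-e$ to the requirement $n\geq e+1$, which is exactly the consequence of bold and $b\geq 2$ noted above.

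I do not anticipate any serious obstacle. The only point requiring a bit of care is the correct translation of ``precedes'' for the \emph{underlying set} of the support (as opposed to ``precedes'' for the individual segments appearing in the multisegment parametrising the representation), and keeping straight that the decisive inequality $n\geq e+1$ is forced by the combination of $b+e\leq n$ with $b\geq 2$, both of which are available in this setting.
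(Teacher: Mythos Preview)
Your proof is correct and follows essentially the same strategy as the paper: reduce to showing that the bold condition $B_2<C_1$ implies the dashed condition $[B_1+1,D_1+1]\cap[A_2,C_2]\neq\emptyset$ under the precedence hypotheses $A_1<A_2$, $D_1<D_2$, $A_2\leq D_1+1$. The paper organizes this via a case split on whether $B_1+1\leq A_2$ or $A_2<B_1+1$ (working in the $A_i,B_i,C_i,D_i$ notation and using the relations $A_i+D_i=B_i+C_i$), while you instead expand the intersection condition as $\max(d+1,b)\leq\min(n+d,b+m-1)$ and check the four resulting inequalities directly in the $n,d,m,e,b$ parameters; the content is the same, and in particular your one nontrivial inequality $d+1\leq b+m-1$ is exactly the paper's $B_1+1\leq C_2$. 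One small imprecision: the sufficient bound after applying $b+m\geq n+d+2-e$ is actually $n\geq e$, not $n\geq e+1$, but since you have the stronger $e\leq n-2$ this is harmless.
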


\begin{proof}
The assumption of the lemma that underlying sets of $\text{supp}(L(a(n,d)^{(\nu_{\rho'}^k\rho')})$ and $\text{supp}(L(a(m,e)^{(\nu_{\rho'}^l\rho')})$ are linked,  implies
$$
A_1<A_2, D_1<D_2, A_2\leq D_1+1.
$$
Suppose that the $\pi_i$'s are in contact, and that the intersection of bold segments is non-empty. This implies
$$
B_2+1\leq C_1 \text{ i.e. } B_2<C_1.
$$
We shall now show   that
\begin{equation}
\label{seg}
 [B_1+1,D_1+1]\cap [A_2, C_2]
 \end{equation}
  is also non-empty  (which is the intersection of the dashed segments).
We consider  two cases.
\begin{enumerate}
\item Suppose $B_1+1\leq A_2$. Then \eqref{seg} is non-empty if and only if $[A_2,D_1+1]\ne \emptyset$. Clearly, this is the case if and only if $A_2\leq D_1+1$. We know that this holds. Therefore, \eqref{seg} is non-empty.

\item Now, suppose  $A_2<B_1+1$. Then \eqref{seg} is non-empty if and only if $[B_1+1,C_2]\ne \emptyset$, which  is the case if and only if $B_1+1 \leq C_2$. We show below that this holds.

Observe that $A_1<A_2$, $D_1<D_2$ and $B_2+1\leq C_1$ imply $A_1+B_2+D_1+1< A_2+C_1+D_2$, i.e.,  $A_1-C_1+D_1+1< A_2-B_2+D_2$. From $A_i+D_i=B_i+C_i$, $i=1,2$, we get $B_1+1<C_2$. This obviously  implies the inequality $\leq$, which we wanted to prove. Therefore, \eqref{seg} is not empty.

\end{enumerate}
The proof  of the lemma is now complete.
\end{proof}

\section{Reducibility in the case of linked underlining sets of supports}\label{linked}

We continue with the notation introduced in the previous section.

\subsection{Reducibility criterion in the linked case}

\begin{proposition}
Suppose that the underlying sets of  
$$
\text{supp}(L(a(n,d)^{(\nu_{\rho'}^k\rho')})\text{ \ and \ }\text{supp}(L(a(m,e)^{(\nu_{\rho'}^l\rho')})
$$
 are linked segments. Then 
\begin{equation}
\label{prod}
L(a(n,d)^{(\nu_{\rho'}^k\rho')}) \t L(a(m,e)^{(\nu_{\rho'}^l\rho')})
\end{equation}
 reduces if and only if $L(a(n,d)^{(\nu_{\rho'}^k\rho')})$ and $ L(a(m,e)^{(\nu_{\rho'}^l\rho')})$ are crossed.
\end{proposition}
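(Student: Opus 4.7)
I prove the two directions separately. The ($\Rightarrow$) direction is immediate: if $L(a_1)\times L(a_2)$ reduces, then by the Badulescu--Lapid--M\'inguez criterion \eqref{BLM''} the multisegments $a_1$ and $a_2$ are crossed.

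For the ($\Leftarrow$) direction, assume $a_1$ and $a_2$ are crossed. I will show that $(a_1+a_2)^t \neq a_1^t+a_2^t$; reducibility then follows from the criterion \eqref{RC}. The pairs $(a_1, a_2^t)$ and $(a_1^t, a_2)$ have the same underlying supports as $(a_1, a_2)$ (since $^t$ preserves the multiset of cuspidal supports), hence are linked, and Lemma \ref{one-condition} applies to each pair. A direct computation in the notation of \S4.3, keeping in mind that for $a_2^t = a(e,m)^{(\rho)}$ the roles of $B$ and $C$ are swapped relative to $a_2$, then shows that being crossed is equivalent to the two numerical inequalities
$$
b+e\geq d+2 \qquad\text{and}\qquad b+m\geq n+2.
$$

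I now apply the Moeglin--Waldspurger algorithm MWA$^\leftarrow$ to $a_1+a_2$, with the goal of exhibiting a segment in $(a_1+a_2)^t$ of cardinality $>\max(n,m)$. Such a segment cannot occur in $a_1^t+a_2^t$: by the upper-bound observation of \S\ref{est} applied separately to $a_1$ and $a_2$, every segment in $a_1^t$ has length exactly $n$ and every segment in $a_2^t$ has length exactly $m$. Starting from the unique segment with maximal end $\Delta_1 = [b+m-1, b+m+e-2]$, the algorithm chains downward through ends $b+m+e-3, b+m+e-4, \ldots$, at each stage choosing, among segments of the required end linked to the current chain, the one with maximal beginning. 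The inequality $b+e\geq d+2$ guarantees that ends of segments from $a_1$ lie within striking distance of the support of $a_2$, so the chain must eventually transition from $a_2$-segments to $a_1$-segments; the inequality $b+m\geq n+2$ then ensures that once the transition occurs, the chain cannot terminate before accumulating more than $\max(n,m)$ stages. The resulting $\Gamma$ of that step is the required long segment in $(a_1+a_2)^t$.

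The main obstacle is the case analysis within the algorithm: depending on the relative sizes of $b, d, n, m, e$, the long $\Gamma$ may be produced in the very first step of MWA$^\leftarrow$ or only in a later step, after some top layers of $a_2$ (which do not yet interact with $a_1$) have been peeled off. Remark \ref{rem-alg} is the workhorse for this bookkeeping, controlling when a step ends and when a chain must continue. In every configuration, however, the two inequalities $b+e\geq d+2$ and $b+m\geq n+2$ are precisely what is needed to force the chain to overflow $\max(n,m)$ at some step of the algorithm, which completes the proof via \eqref{RC}.
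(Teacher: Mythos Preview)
Your overall strategy coincides with the paper's: the $(\Rightarrow)$ direction is the BLM criterion, and for $(\Leftarrow)$ you aim to show $(a_1+a_2)^t\neq a_1^t+a_2^t$ and invoke \eqref{RC}. You also correctly translate the crossing condition (via Lemma~\ref{one-condition}) into $B_1<B_2$ and $C_1<C_2$, i.e.\ $b+e\geq d+2$ and $b+m\geq n+2$.

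The gap is in the execution of the $(\Leftarrow)$ direction. You assert that some step of MWA$^\la$ applied to $a_1+a_2$ always produces a segment of length strictly greater than $\max(n,m)$, but you do not prove this; the phrases ``within striking distance'', ``must eventually transition'', and ``in every configuration the two inequalities are precisely what is needed'' are not arguments. You yourself flag the case analysis as ``the main obstacle'' and then do not carry it out. Concretely, in the very first step the chain can fail to cross over from $a_2$ to $a_1$ (e.g.\ when the $a_2$-segments are at least as long as the $a_1$-segments, the $a_1$-segment with the relevant end is contained in the current $a_2$-segment and hence not linked to it), so one must track what happens in later steps after layers have been peeled off. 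Showing that a long segment eventually appears is not automatic and requires exactly the kind of detailed case distinction you omit.

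The paper's proof does carry out the case analysis, splitting according to whether $D_1<B_2$ or $B_2\leq D_1$, and within each according to whether $C_1<A_2$ or $A_2\leq C_1$. Notably, the paper does \emph{not} always argue via a segment of length $>\max(n,m)$: in two of the four cases it switches to the dual algorithm $^{\ra}$MWA, or it shows that the unique segment in $(a_1+a_2)^t$ with a prescribed beginning (or end) differs from the corresponding segment in $a_1^t+a_2^t$, without controlling its exact length. So even if your long-segment claim is ultimately true, the paper's route is both more direct and avoids having to prove that stronger statement.
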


\begin{proof} Thanks to \eqref{BLM''}, we know that the reducibility of \eqref{prod} implies that the corresponding multisegments are crossed. We need to prove the opposite implication, i.e. that if we have crossed multisegments in the lemma, then we have reducibility.

 Let
$$
\pi_1=L(a(n,d)^{(\nu_{\rho'}^k\rho')}) , \qquad \pi_2= L(a(m,e)^{(\nu_{\rho'}^l\rho')}),
$$
$$
a_1=a(n,d)^{(\nu^k_{\rho'}\rho')}, \qquad a_2=a(m,e)^{(\nu_{\rho'}^l\rho')}.
$$
Here we can write 
$$
L(a(n,d)^{(\nu_{\rho'}^k\rho')}= L([1,d]^{(\rho)},[2,d+1]^{(\rho)}],\dots,[n,d+n-1]^{(\rho)}]),
$$
$$
L(a(m,e)^{(\nu_{\rho'}^l\rho')})=  L([b,b+e-1]^{(\rho)},[b+1,b+e]^{(\rho)},\dots,[b+m-1,b+m-1+e-1]^{(\rho)}]),
$$
for some $\rho\in \C C$, where 
$
1<b,
$
$
d+n<b+m-1+e,
$
and
$
b\leq d+n.
$

We introduce $A_i,B_i,C_i,D_i$ by the same formulas as in 4.2.
The linking condition in this notation  is 
$$
A_1<A_2,
$$ 
$$
D_1<D_2,
$$
$$
A_2\leq D_1+1.
$$
Further, by Remark \ref{one-condition}, the crossing condition is equivalent to
$$
[B_1+1,D_1+1]\cap [A_2,B_2]\ne \emptyset
$$
and
$$
[C_1+1,D_1+1]\cap [A_2,C_2]\ne \emptyset.
$$
Since $A_2\leq D_1+1$, the above two conditions are equivalent to
$$
B_1+1\leq B_2
$$
and
$$
C_1+1\leq C_2.
$$

We consider several cases.

\begin{enumerate} 

\item Let $D_1<B_2$.

\begin{enumerate}

\item Suppose $C_1<A_2.$

We first  illustrate   the situation graphically:
\begin{equation} 
\label{g1}
\xymatrix@C=.6pc@R=.1pc
{ 
a_1:& \bullet & \bullet \ar @{-}[l]  & \bullet \ar @{-}[l]  & \bullet \ar @{-}[l]  & \bullet \ar @{-}[l] 
\\ 
&& \bullet & \bullet \ar @{-}[l]  & \bullet \ar @{-}[l]  & \bullet \ar @{-}[l]  & \bullet \ar @{-}[l] 
\\
a_2: & &&&  & \bullet & \bullet \ar @{-}[l]  & \bullet \ar @{-}[l]  & \bullet \ar @{-}[l]  & \bullet \ar @{-}[l]  
  \\
  &&&& && \bullet & \bullet \ar @{-}[l]  & \bullet \ar @{-}[l]  & \bullet \ar @{-}[l]  & \bullet \ar @{-}[l]  
  \\
 & & &&& && \bullet & \bullet \ar @{-}[l]  & \bullet \ar @{-}[l]  & \bullet \ar @{-}[l]  & \bullet \ar @{-}[l]  
  \\
 && & &&& && \bullet & \bullet \ar @{-}[l]  & \bullet \ar @{-}[l]  & \bullet \ar @{-}[l]  & \bullet \ar @{-}[l]  
} 
\end{equation}
\begin{equation} 
\label{g2}
\xymatrix@C=.6pc@R=.1pc
{ 
a_1^t: & \bullet & \bullet & \bullet & \bullet & \bullet
\\ 
&& \bullet \ar[lu] & \bullet\ar[lu] & \bullet\ar[lu] & \bullet\ar[lu] & \bullet\ar[lu] 
\\
a_2^t: & &&&  & \bullet & \bullet & \bullet &\bullet &\bullet
  \\
 & & &&& & \bullet \ar[lu]& \bullet\ar[lu] & \bullet\ar[lu] & \bullet\ar[lu] & \bullet\ar[lu] 
  \\
& & &&& & & \bullet \ar[lu]& \bullet\ar[lu] & \bullet\ar[lu] & \bullet\ar[lu] & \bullet\ar[lu] 
  \\
& && &&& & & \bullet \ar[lu]& \bullet\ar[lu] & \bullet\ar[lu] & \bullet\ar[lu] & \bullet\ar[lu] 
} 
\end{equation}
\begin{equation} 
\label{g3}
\xymatrix@C=.6pc@R=.1pc
{ 
(a_1+a_2)^t & \bullet & \bullet & \bullet & \bullet & \bullet
\\ 
&& \bullet \ar[lu] & \bullet\ar[lu] & \bullet\ar[lu] & \bullet\ar[lu] & \bullet\ar[lu] 
\\
 & &&&  & \bullet \ar[lu] & \bullet \ar[lu]  & \bullet \ar[lu]  &\bullet &\bullet
  \\
 & & &&& & \bullet \ar[lu]& \bullet\ar[lu] & \bullet\ar[lu] & \bullet\ar[lu] & \bullet\ar[lu] 
  \\
& & &&& & & \bullet \ar[lu]& \bullet\ar[lu] & \bullet\ar[lu] & \bullet\ar[lu] & \bullet\ar[lu] 
  \\
& && &&& & & \bullet \ar[lu]& \bullet\ar[lu] & \bullet\ar[lu] & \bullet\ar[lu] & \bullet\ar[lu] 
} 
\end{equation}
We now show $a_1^t+a_2^t\ne (a_1+a_2)^t$. Then the reducibility criterion \eqref{RC} implies  reducibility. Observe that $a_1^t+a_2^t=a(d,n)^{(\nu^k_{\rho'}\rho')}+a(e,m)^{(\nu_{\rho'}^l\rho')}$. Therefore in this multisegment, there are only segments of length $n$ and $m$. The assumptions  $C_1<A_2$ and $D_1< B_2$, together with MWA$^\la$ directly imply that $(a_1+a_2)^t$ will have at least one segment of length $n+m$ (see the graphical interpretation). Namely, in the first $B_2-D_1-1$ steps of the algorithm, we get segments of length $m$, and in the following step we get a segment of length $m+n$. This completes the proof of reducibility in this case.

\item Now suppose  $A_2\leq C_1$. 

We again  illustrate   the situation graphically:
\begin{equation} 
\label{g4}
\xymatrix@C=.6pc@R=.1pc
{ 
a_1:& \bullet & \bullet \ar @{-}[l]  & \bullet \ar @{-}[l]  & \bullet \ar @{-}[l]  & \bullet \ar @{-}[l]  
\\ 
&&    \bullet & \bullet \ar @{-}[l]  & \bullet \ar @{-}[l]  & \bullet \ar @{-}[l]  & \bullet \ar @{-}[l]  
\\
a_2: &   &  \bullet & \bullet \ar @{-}[l]  & \bullet \ar @{-}[l]  &  \bullet  \ar @{-}[l]  & \bullet \ar @{-}[l]  & \bullet \ar @{-}[l]  & \bullet \ar @{-}[l]  & \bullet \ar @{-}[l]  
  \\
  &&&  \bullet & \bullet \ar @{-}[l]  & \bullet \ar @{-}[l]  &  \bullet  \ar @{-}[l]   & \bullet \ar @{-}[l]  & \bullet \ar @{-}[l]  & \bullet \ar @{-}[l]  & \bullet \ar @{-}[l]  
  \\
 & & &&  \bullet & \bullet \ar @{-}[l]  & \bullet \ar @{-}[l]  &  \bullet  \ar @{-}[l]  & \bullet \ar @{-}[l]  & \bullet \ar @{-}[l]  & \bullet \ar @{-}[l]  & \bullet \ar @{-}[l]  
  \\
 && & &&  \bullet & \bullet \ar @{-}[l]  & \bullet \ar @{-}[l]  &  \bullet \ar @{-}[l]   & \bullet \ar @{-}[l]  & \bullet \ar @{-}[l]  & \bullet \ar @{-}[l]  & \bullet \ar @{-}[l]  
} 
\end{equation}
\begin{equation} 
\label{g5}
\xymatrix@C=.6pc@R=.1pc
{ 
a_1^t: & \bullet & \bullet & \bullet & \bullet & \bullet 
\\ 
&& \bullet \ar[lu] & \bullet\ar[lu] & \bullet\ar[lu] & \bullet\ar[lu] & \bullet\ar[lu]       
\\
a_2^t: &   & \bullet & \bullet & \bullet &\bullet &\bullet & \bullet &\bullet &\bullet
  \\
 & &  & \bullet \ar[lu]& \bullet\ar[lu] & \bullet\ar[lu] & \bullet\ar[lu] & \bullet\ar[lu]  & \bullet\ar[lu] & \bullet\ar[lu] & \bullet\ar[lu] 
  \\
& &  & & \bullet \ar[lu]& \bullet\ar[lu] & \bullet\ar[lu] & \bullet\ar[lu] & \bullet\ar[lu] & \bullet\ar[lu] & \bullet\ar[lu] & \bullet\ar[lu] 
  \\
& &&  & & \bullet \ar[lu]& \bullet\ar[lu] & \bullet\ar[lu] & \bullet\ar[lu] & \bullet\ar[lu] & \bullet\ar[lu] & \bullet\ar[lu] & \bullet\ar[lu] 
} 
\end{equation}
Observe that in $a_1^t+a_2^t$ there is only one segment starting at exponent $A_1=1$. This is the segment $[A_1,C_1]^{(\rho)}=[1,n]^{(\rho)}$.

Now applying  $^\ra$MWA, starting with exponent $1$, we get exponents $2$, $3$, \dots, $n$. Since $n=C_1<C_2$, we can find a segment in $a_2$ starting with $n+1$. The assumption  $D_1<B_2$ implies that this segment is linked with the previous segment used in the algorithm. This implies that the (unique) segment in $(a_1+a_2)^t$ starting with $A_1$ is not $[A_1,C_1]^{(\rho)}=[1,n]^{(\rho)}$, as was the the case in $a_1^t+a_2^t$. Thus $(a_1+a_2)^t\ne a_1^t+a_2^t$, which implies  reducibility.
 
\end{enumerate} 

\item Let $B_2 \leq D_1$.

\begin{enumerate} 

\item Suppose $C_1<A_2.$

We again  illustrate   the situation graphically:
\begin{equation} 
\label{g44}
\xymatrix@C=.6pc@R=.1pc
{ 
a_1:& \bullet & \bullet \ar @{-}[l]  & \bullet \ar @{-}[l]  & \bullet \ar @{-}[l]  & \bullet \ar @{-}[l]  
\\ 
&&    \bullet & \bullet \ar @{-}[l]  & \bullet \ar @{-}[l]  & \bullet \ar @{-}[l]  & \bullet \ar @{-}[l]  
\\
a_2: & &  &  \bullet & \bullet \ar @{-}[l]  & \bullet \ar @{-}[l]  &  \bullet  \ar @{-}[l]   
  \\
  &&&&  \bullet & \bullet \ar @{-}[l]  & \bullet \ar @{-}[l]  &  \bullet  \ar @{-}[l]   
  \\
 & & &&&  \bullet & \bullet \ar @{-}[l]  & \bullet \ar @{-}[l]  &  \bullet  \ar @{-}[l]     
  \\
 && & &&&  \bullet & \bullet \ar @{-}[l]  & \bullet \ar @{-}[l]  &  \bullet \ar @{-}[l]     
} 
\end{equation}
In $a_1^t+a_2^t$ there is a unique segment which ends with exponent $D_2$. It is $[B_2,D_2]^{(\rho)}$.

We now start  MWA$^\la$. It starts with exponent $D_2$, and proceeds with $D_2-1$,\dots , $B_2$. Since $B_1<B_2$, we can find a segment in $a_2$  ending with $B_2-1$. Since $C_1<A_2$, this segments precedes the previous one ending with $B_2$. This implies that the segment in $(a_1+a_2)^t$ ending with exponent $D_2$ is not $[B_2,D_2]^{(\rho)}$. This implies $a_1^t+a_2^t\ne(a_1+a_2)^t$, which again implies  reducibility.

\item Suppose now $A_2 \leq C_1.$

We again  illustrate   the situation graphically:
\begin{equation} 
\label{g444}
\xymatrix@C=.6pc@R=.1pc
{ 
a_1:& \bullet & \bullet \ar @{-}[l]  & \bullet \ar @{-}[l]  & \bullet \ar @{-}[l]  & \bullet \ar @{-}[l]  
\\ 
&&    \bullet & \bullet \ar @{-}[l]  & \bullet \ar @{-}[l]  & \bullet \ar @{-}[l]  & \bullet \ar @{-}[l] 
\\
&&&    \bullet & \bullet \ar @{-}[l]  & \bullet \ar @{-}[l]  & \bullet \ar @{-}[l]  & \bullet \ar @{-}[l]  
\\
a_2: & &  &  \bullet & \bullet \ar @{-}[l]  & \bullet \ar @{-}[l]  &  \bullet  \ar @{-}[l]   
  \\
  &&&&  \bullet & \bullet \ar @{-}[l]  & \bullet \ar @{-}[l]  &  \bullet  \ar @{-}[l]   
  \\
 & & &&&  \bullet & \bullet \ar @{-}[l]  & \bullet \ar @{-}[l]  &  \bullet  \ar @{-}[l]     
  \\
 && & &&&  \bullet & \bullet \ar @{-}[l]  & \bullet \ar @{-}[l]  &  \bullet \ar @{-}[l]     
} 
\end{equation}
If $e\leq d$, then start MWA$^\la$, and the first segment that we get from the algorithm implies $a_1^t+a_2^t\ne(a_1+a_2)^t$, which implies  reducibility.

If $d\leq e$, then start $^\ra $MWA, and the first segment that we get from the algorithm implies $a_1^t+a_2^t\ne(a_1+a_2)^t$. Again we get  reducibility.
\end{enumerate} 

\end{enumerate} 
\end{proof}

\section{Irreducibility in the case of non-linked underlining sets of supports}
\label{non-linked}

We continue with the notation  of the last two sections.
In this section, we consider  the case where the underlying set of $\text{supp}(L(a(n,d)^{(\nu_{\rho'}^k\rho')})$ contains the underlying set of $\text{supp}(L(a(m,e)^{(\nu_{\rho'}^l\rho')})$. 
Our aim in this section is to prove that 
\begin{equation}
\label{irr}
L(a(n,d)^{(\nu_{\rho'}^k\rho')}) \t L(a(m,e)^{(\nu_{\rho'}^l\rho')})
\end{equation}
 is irreducible.
\subsection{Some remarks regarding irreducibility} We know by  criterion \eqref{BLM} that \eqref{irr} is irreducible if $L(a(n,d)^{(\nu_{\rho'}^k\rho')})$ and $L(a(m,e)^{(\nu_{\rho'}^l\rho')})$ are not crossed.

Since $L(a(n,d)^{(\nu_{\rho'}^k\rho')}) \t L(a(m,e)^{(\nu_{\rho'}^l\rho')})$ is irreducible if and only the dual representation   $L(a(d,n)^{(\nu_{\rho'}^k\rho')}) \t L(a(e,m)^{(\nu_{\rho'}^l\rho')})$ is irreducible, it is enough to consider the case
$$
n\leq d.
$$

Below we use   the notation $\pi_i$, $a_i$, $A_i,B_i,C_i,D_i$, $i=1,2$, from the previous section.

Now the condition of inclusion of underlying sets tells us in this notation,
$$
A_1\leq A_2,
$$ 
$$
D_2\leq D_1,
$$
while the assumption $n\leq d$ becomes
$$
C_1\leq B_1.
$$

\subsection{On crossed case} 
Continuing the above analysis, we know that the segments are crossed if and only if both conditions below hold:
\begin{enumerate}

\item \hskip20mm
$C_2+1\leq C_1$ or 
$
[B_1+1,D_1+1]\cap [A_2,B_2]\ne \emptyset;
$

\vskip2mm

\item
\hskip20mm
$B_2+1\leq B_1$ or 
$
[C_1+1,D_1+1]\cap [A_2,C_2]\ne \emptyset.
$

\end{enumerate}
Since we know that \eqref{irr} is irreducible if $a_1$ and $a_2$ are not crossed,
we  shall analyze the case of two crossed representations in this subsection.Therefore, we  assume that both above conditions hold in the rest of this subsection.
 Since $B_2,C_2<D_1+1$, the above requirements are equivalent to 
 \begin{enumerate}

\item \hskip20mm
$C_2+1\leq C_1$ or 
$
B_1+1 \leq B_2
$
(i.e., $C_2< C_1$ or 
$
B_1< B_2
$);

\vskip2mm

\item
\hskip20mm
$B_2+1\leq B_1$ or 
$
C_1+1\leq  C_2
$
(i.e., $B_2< B_1$ or 
$
C_1<  C_2
$).

\end{enumerate}

Thus the crossing condition is
$$
C_2<C_1\text{ and } B_2<B_1 \text{ \ \ or\ \ }C_1<C_2\text{ and } B_1<B_2.
$$

In studying  the question of irreducibility of \eqref{irr}, without lost of generality we can  assume
$$
A_2+D_2\leq  A_1+D_1
$$
 (if this is not the case, passing to contragredients will bring us to this case).  
 This implies 
 $$
 B_2+C_2\leq B_1+ C_1.
 $$
 
 Therefore, the crossing condition for the case $A_2+D_2\leq A_1+D_1$ is
 $$
 C_2<C_1\text{ and } B_2<B_1.
 $$
 Observe that $B_2<B_1$ (i.e. $b+e-1< d$) implies
 $$
 e< d.
 $$

\subsection{Additivity of $^t$ in the linked case} 

In the lemma below we only assume that the underlying  set of the cuspidal support of $a_1$ contains the corresponding set of $a_2$. We continue with the previous notation.

\begin{lemma} 
With the above  notation and assumptions, we have
$$
a_1^t+a_2^t
=(a_1+a_2)^t.
$$ 
\end{lemma}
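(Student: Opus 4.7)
The plan is to apply the M\oe{}glin--Waldspurger algorithm $\text{MWA}^{\la}$ directly to $a_1+a_2$ and verify that its output is $a_1^t+a_2^t$. Both $a_1$ and $a_2$ consist of \emph{parallel} segments of a common length ($d$ for $a_1$, $e$ for $a_2$), each obtained from the previous by shifting both endpoints by $1$; applying $\text{MWA}^{\la}$ to $a_i$ alone, as in Section~2.6, one reads off
$$
a_1^t = \sum_{k=1}^{d}[k,k+n-1]^{(\rho)}, \qquad a_2^t = \sum_{k=0}^{e-1}[b+k,b+k+m-1]^{(\rho)},
$$
so the target multisegment consists of $d$ segments of length $n$ and $e$ segments of length $m$.

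The decisive claim is that \emph{every step of $\text{MWA}^{\la}$ applied to $a_1+a_2$ uses segments coming, after previous shortenings, entirely from $a_1$ or entirely from $a_2$.} Granted this, a single $a_1$-step - started, after $k_1$ previous $a_1$-steps, at the current max-end $a_1$-segment $[n,D_1-k_1]$ - decreases both endpoints by $1$ at each stage, runs for exactly $n$ stages, and outputs $\Gamma=[d-k_1,D_1-k_1]$ of length $n$; iterating $d$ times exhausts $a_1$ and reproduces $a_1^t$, and symmetrically the $e$ $a_2$-steps reproduce $a_2^t$. To prove the claim I would argue stage-by-stage. Suppose the previous stage used a segment $[X,Y]$ from shortened $a_1$, so $Y-X+1=d-k_1$. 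The $a_1$-candidate at the next stage is $[X-1,Y-1]$ (beginning $X-1$, available when $X\geq 2$); any $a_2$-candidate must be the unique segment of shortened $a_2$ ending at $Y-1$, whose beginning equals $X-1+(d-k_1)-(e-k_2)$. Three subcases arise: if $d-k_1>e-k_2$ the $a_2$-candidate has beginning $\geq X$ and so violates the linkedness condition; if $d-k_1<e-k_2$ it has beginning $<X-1$, strictly dominated by the $a_1$-candidate; if $d-k_1=e-k_2$ the two beginnings coincide, and we may consistently pick the $a_1$-candidate. Once a step bottoms out at $X=1$ (resp.\ $X=b$ for an $a_2$-step), the hypothesis $\text{supp}(a_2)\subseteq\text{supp}(a_1)$ - which forces $1\leq b$ - rules out any cross-family extension, so the step terminates cleanly after exactly $n$ (resp.\ $m$) stages.

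The main obstacle is the tied case $d-k_1=e-k_2$, where the algorithm has a genuine choice between an $a_1$- and an $a_2$-candidate of equal beginning. I would sidestep this by appealing to the theorem that $\text{MWA}^{\la}$ computes the well-defined involution $a\mapsto a^t$: any consistent tie-breaking, in particular always favouring $a_1$, yields the same final multisegment. With that convention in place the bookkeeping is linear in $n+m$, and the desired equality $(a_1+a_2)^t=a_1^t+a_2^t$ follows.
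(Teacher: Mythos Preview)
Your ``decisive claim''---that every step of $\text{MWA}^{\la}$ applied to $a_1+a_2$ stays entirely within one family---is false. Take $n=d=3$, $b=4$, $m=2$, $e=1$, so $a_1=([1,3],[2,4],[3,5])$ and $a_2=([4,4],[5,5])$; the containment $\supp(a_2)\subseteq\supp(a_1)$ holds. The first step of $\text{MWA}^{\la}$ is \emph{forced} to begin with $[5,5]$ (it has the maximal end and, among those, the maximal beginning), then $[4,4]$, then $[1,3]$: the last stage jumps into $a_1$ and the step outputs $[3,5]$, of length $3\neq m$. The gap is precisely in your termination argument for $a_2$-steps: once you reach $X=b$ with previous segment $[b,b+e-k_2-1]$, an $a_1$-segment ending at $b+e-k_2-2$ with beginning $b-1+(e-k_2)-(d-k_1)<b$ can very well exist (here $e-k_2<d-k_1$) and, being the only linked candidate, must be chosen. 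Your asymmetric hypothesis $1\le b$ protects termination of $a_1$-steps but says nothing here.

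The paper avoids this by not attempting to control all steps at once. It first reduces (via duality, hermitian contragredient, and known base cases $n=1$, $d=1$, or $A_1=A_2,\,D_1=D_2$) to the situation where $D_2<D_1$ strictly, and then argues by induction on $d$: only the \emph{first} step of $\text{MWA}^{\la}$ is analyzed. Since $D_1$ is the unique maximal end, that step starts with $[C_1,D_1]\in a_1$; your trichotomy then correctly keeps it in $a_1$ for all $n$ stages, outputting $[B_1,D_1]$. The point is that the leftover is $a_1'+a_2$ with $a_1'=a(n,d-1)^{(\nu_{\rho'}^{-1/2}\nu_{\rho'}^k\rho')}$ still of Speh shape and still satisfying the containment hypothesis (because $D_2\le D_1-1$), so the inductive hypothesis finishes the job. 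Your direct one-shot argument cannot be repaired without essentially rediscovering this induction, because once a mixed step occurs the leftover multisegment is no longer of the form (Speh $+$ Speh).
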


\begin{proof} We shall first list some simple reductions of the proof of the lemma.

\begin{enumerate}
\item Obviously, it is enough to prove the claim of the lemma for   $(a_1,a_2)$ or $(a_1^t,a_2^t)$. 

\item The above observation and Theorem 4.2 of \cite{Z} imply that the lemma holds if  $n=1$ or $d=1$. Therefore, it is enough to prove the lemma in the case $n\geq 2$ and $d\geq 2$.

\item The claim of the lemma holds if $A_1=A_2$ and $D_1=D_2$ (since in this case we can twist to the unitarizable setting, in which case Corollary 1 of \cite{Ba-Sp} implies irreducibility of $L(a_1)\t (a_2)$, and then use the fact that irreducibility of $L(a_1)\t (a_2)$ implies the claim of the lemma). Therefore, it is enough to prove the lemma in the case $A_1<A_2$ or $D_2>D_1$.

\item Suppose that we are not in the case of $A_1=A_2$ and $D_1=D_2$ (when we know that the claim of the lemma holds). Then it is enough to prove the lemma in the case $D_2>D_1$ (passing to the herimitian contragredient and twisting by a character will bring us to the case $A_1<A_2$). 

\end{enumerate}

Now we shall prove the lemma by induction. The above reduction (2)  provides the basis of induction. Fix some $n\geq 2$ and $d\geq 2$ and suppose  that the claim of the lemma holds for pairs $n',d'$ where $n'<n$ or $d'<d$. By reduction (4), it is enough to consider the case $D_2>D_1$.

We start MWA$\la$. We  must begin with $D_1$. An easy discussion related to the fact if the segments in $a_2$ are longer, equal or shorter then the ones in $a_1$, implies that the first step of the algorithm will produce segment $[B_1,D_1]^{(\rho)}$ (see Remark \ref{rem-alg}).
We illustrate the situation by the drawing below:
\begin{equation}  
\label{g7}
\xymatrix@C=.6pc@R=.1pc
{ 
   \bullet  & \bullet  \ar @{-}[l] & \bullet \ar @{-}[l]  &  \bullet \ar @{-}[l]   & \bullet  \ar @{-}[l] & \bullet \ar @{-}[l]  &\bullet \ar @{-}[l]  & \bullet \ar @{-}[l] & \bullet {\ar @{-}[l]} 
\\ 
& \bullet   & \bullet \ar @{-}[l]  & \bullet \ar @{-}[l]  
& \bullet \ar @{-}[l]  &  \bullet \ar @{-}[l]   & \bullet  \ar @{-}[l] & \bullet \ar @{-}[l] & \bullet \ar @{-}[l] & \bullet {\ar @{-}[l] } \ar[ul]
\\
 && \bullet   & \bullet \ar @{-}[l]  & \bullet \ar @{-}[l]  & \bullet \ar @{-}[l]  &  \bullet \ar @{-}[l]   & \bullet  \ar @{-}[l] & \bullet \ar @{-}[l] & \bullet \ar @{-}[l] & \bullet {\ar @{-}[l] } \ar[ul]
\\
    &&& \bullet   & \bullet \ar @{-}[l]  & \bullet \ar @{-}[l]   & \bullet \ar @{-}[l]  &  \bullet \ar @{-}[l]   & \bullet  \ar @{-}[l] & \bullet \ar @{-}[l] & \bullet \ar @{-}[l] & \bullet {\ar @{-}[l] } \ar[ul]
  \\
 & & && \bullet   & \bullet \ar @{-}[l]  & \bullet \ar @{-}[l]  & \bullet \ar @{-}[l]  &  \bullet \ar @{-}[l]   & \bullet  \ar @{-}[l] & \bullet \ar @{-}[l]  & \bullet \ar @{-}[l]  & \bullet {\ar @{-}[l] } \ar[ul]
  \\
 && & && \bullet   & \bullet \ar @{-}[l]  & \bullet \ar @{-}[l]  & \bullet \ar @{-}[l]  &  \bullet \ar @{-}[l]   & \bullet  \ar @{-}[l] & \bullet \ar @{-}[l] & \bullet \ar @{-}[l]  & \bullet {\ar @{-}[l] } \ar[ul]
  \\
& && & && \bullet   & \bullet \ar @{-}[l]  & \bullet \ar @{-}[l]  & \bullet \ar @{-}[l]  &  \bullet \ar @{-}[l]   & \bullet  \ar @{-}[l] & \bullet \ar @{-}[l] & \bullet \ar @{-}[l]  & \bullet {\ar @{-}[l] } \ar[ul]
\\
&& && & && \bullet   & \bullet \ar @{-}[l]  & \bullet \ar @{-}[l]  & \bullet \ar @{-}[l]  &  \bullet \ar @{-}[l]   & \bullet  \ar @{-}[l] & \bullet \ar @{-}[l] & \bullet \ar @{-}[l]  & \bullet {\ar @{-}[l] } \ar[ul]
\\
    & \bullet & \bullet \ar @{-}[l] & \bullet \ar @{-}[l] & \bullet \ar @{-}[l] & \bullet \ar @{-}[l] & \bullet \ar @{-}[l] & \bullet \ar @{-}[l]  & \bullet \ar @{-}[l] 
\\ 
 && \bullet & \bullet \ar @{-}[l]  & \bullet \ar @{-}[l] & \bullet \ar @{-}[l] & \bullet \ar @{-}[l] & \bullet \ar @{-}[l] & \bullet \ar @{-}[l]  & \bullet \ar @{-}[l] 
\\
 &&& \bullet & \bullet \ar @{-}[l]   & \bullet \ar @{-}[l] & \bullet \ar @{-}[l] & \bullet \ar @{-}[l] & \bullet \ar @{-}[l] & \bullet \ar @{-}[l]  & \bullet \ar @{-}[l] 
 \\
 &&&& \bullet & \bullet \ar @{-}[l]  & \bullet \ar @{-}[l] & \bullet \ar @{-}[l] & \bullet \ar @{-}[l] & \bullet \ar @{-}[l] & \bullet \ar @{-}[l]  & \bullet \ar @{-}[l] 
} 
\end{equation}
Denote
$$
a_1'=a(n,d-1)^{(\nu_{\rho'}^{-1/2}\nu_{\rho'}^k\rho')}.
$$
Now we see that $(a_1+a_2)^\la=a_1'+a_2$. Further,  the inductive assumption gives $(a_1'+a_2)^t=(a_1')^t+a_2^t$.
From this follows that $(a_1+a_2)^t=([B_1,D_1]^{(\rho)}) +(a_1'+a_2)^t= ([B_1,D_1]^{(\rho)}) +(a_1')^t+a_2^t=([B_1,D_1]^{(\rho)}) +a(d-1,n)^{(\nu_{\rho'}^{-1/2}\nu_{\rho'}^k\rho')}+a_2^t=a(d,n)^{(\nu_{\rho'}^k\rho')}+a_2^t=a_1^t+a_2^t.$ This completes the proof of the lemma.
\end{proof}

\subsection{Proof of the  irreducibility in the linked case}

\begin{lemma}
If $c< a_2$ then $c^t \not< a_2^t$.
\end{lemma}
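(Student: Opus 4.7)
The plan is to derive a contradiction by sandwiching $|c^t|$, the number of segments of $c^t$, between two tight bounds. The key inputs are the upper-bound observation from Section~\ref{est} and the fact that the split operation $\prec$ is non-increasing on cardinalities.

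First I record the effect of a single split $c'\prec b'$: two linked segments $\G_i,\G_j$ of $b'$ are replaced by $\G_i\cup\G_j$ and, if nonempty, $\G_i\cap\G_j$. The total cuspidal support is preserved; the multiset of beginnings of segments either stays the same (overlap case) or loses one element (juxtaposed case, where $\G_i\cap\G_j=\emptyset$); and the number of segments either stays the same or decreases by one. Iterating along a chain $c=c_1\prec\dots\prec c_k=a_2$, every beginning of a segment of $c$ is a beginning of some segment of $a_2$. Since the beginnings $\{b,b+1,\dots,b+m-1\}$ of $a_2=a(m,e)^{(\rho)}$ form a $\Z$-segment of length $m$, the dual of the observation from Section~\ref{est} implies that every segment of $c^t$ has length at most $m$. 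Combined with the conservation of the total number of cuspidal exponents (equal to $me$), this forces the lower bound $|c^t|\geq e$.

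Now suppose for contradiction that $c^t<a_2^t$. The same cardinality monotonicity applied to the chain $c^t\leq a_2^t$ yields the matching upper bound $|c^t|\leq|a_2^t|=e$, so $|c^t|=e$ and each segment of $c^t$ must have length exactly $m$. Both $c^t$ and $a_2^t=a(e,m)^{(\rho)}$ share the same staircase support, and I plan to close the argument by a short combinatorial uniqueness step: at each stage the current leftmost point of the remaining support has multiplicity one, determining the unique length-$m$ segment starting there; peeling it off and iterating recovers $c^t=a_2^t$, contradicting the strict inequality. The main obstacle I anticipate is establishing the two-sided bound on $|c^t|$ cleanly; the crucial structural point is that splits never create new beginnings, so the dual of Section~\ref{est} applies to $c$. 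Once this is in hand, the uniqueness of the staircase decomposition into equal-length segments is routine.
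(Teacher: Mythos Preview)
Your argument is correct, but it takes a longer path than the paper's. Both proofs begin the same way: from $c<a_2$ one extracts that the beginnings (you) or ends (the paper) of segments in $c$ lie in a $\Z$-segment of length $m$, so Section~\ref{est} bounds every segment of $c^t$ by length $m$. At this point the paper finishes in one line: since every segment of $a_2^t$ has length exactly $m$, a single linking step in $a_2^t$ already produces a segment of length at least $m+1$, and further linkings only increase the maximal length; hence $c^t<a_2^t$ would force $c^t$ to contain a segment of length $\geq m+1$, contradicting the bound just obtained.

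You instead combine the length bound with the support count $me$ to get $|c^t|\geq e$, then use cardinality monotonicity under $\prec$ to get $|c^t|\leq e$, conclude all segments of $c^t$ have length exactly $m$, and finally run a peeling/uniqueness argument on the staircase support to force $c^t=a_2^t$. This is valid, and the uniqueness step does go through (removing the forced segment $[b,b+m-1]$ leaves the support of the shifted $a(e-1,m)$, so one inducts), but it is considerably more work than needed. The observation you are missing is that the strict inequality $c^t<a_2^t$ by itself already produces a segment strictly longer than $m$; once you see this, the cardinality and uniqueness arguments become unnecessary.
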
 

\begin{proof}  Suppose that there exists $c\in M(\SC)$ such that $c< a_2$ and $c^t < a_2^t$. 

All the segments in $a_2^t$ have length $m$, and any linking among segments of $a_2^t$ will produce a segment  longer than $m$. Further, linkings may only increase the maximal length of segments. Since $c^t < a_2^t$, we have   a segment in $c$ of length at least $m+1$.

The ends of the segments from $a_2$ form the segment $[B_2,D_2]^{(\rho)}$. Therefore, the ends of segments from $c$ are contained in $[B_2,D_2]$. By \ref{est}, this implies  that the segments in $c^t$ are not longer than $D_2-B_2+1=C_2-A_2+1=m$.

This contradiction completes the proof of the lemma.
\end{proof}

\begin{proposition}
Suppose that underlying set of $\text{supp}(L(a(n,d)^{(\nu_{\rho'}^k\rho')})$ contains the underlying set of $\text{supp}(L(a(m,e)^{(\nu_{\rho'}^l\rho')})$. Then $L(a(n,d)^{(\nu_{\rho'}^k\rho')}) \t L(a(m,e)^{(\nu_{\rho'}^l\rho')})$ is irreducible.
\end{proposition}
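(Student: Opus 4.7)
The plan is to apply the Badulescu irreducibility criterion of Section 3.2 to $a_1:=a(n,d)^{(\nu_{\rho'}^k\rho')}$ and $a_2:=a(m,e)^{(\nu_{\rho'}^l\rho')}$. First I would invoke the reformulation \eqref{BLM''} of the Badulescu--Lapid--M\'inguez criterion to dispose of the case when $a_1$ and $a_2$ are not crossed, in which case $L(a_1)\t L(a_2)$ is already irreducible; so it suffices to treat the crossed case. The first hypothesis of Badulescu's criterion, $(a_1+a_2)^t=a_1^t+a_2^t$, is exactly Lemma 6.1, and the core task is to verify the second: for every $b\in M(\SC)$ with $b<a_1+a_2$, one has $b^t\not< a_1^t+a_2^t$.

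To set up the combinatorics cleanly I would normalize as in Section 6.2: use the $(n,d)\leftrightarrow(d,n)$ Zelevinsky duality to assume $n\le d$, and then pass to contragredients if necessary to arrange $A_2+D_2\le A_1+D_1$. Under these normalizations the crossing condition reduces to the two inequalities $C_2<C_1$ and $B_2<B_1$. Combined with $A_1\le A_2$ coming from the containment hypothesis, this forces $[A_2,C_2]\subseteq [A_1,C_1]$. Consequently all the beginnings of segments in $a_1+a_2$ lie in a single $\Z$-segment of length $n$, namely $[A_1,C_1]$.

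To close the argument I would reason by contradiction: suppose $b<a_1+a_2$ and $b^t<a_1^t+a_2^t$. A single $\prec$-operation preserves or deletes beginnings, so iteration shows that the beginnings of segments in $b$ still lie in $[A_1,C_1]$. By the dual length bound of Section 2.8, every segment in $b^t$ therefore has length at most $n$. On the other hand $a_1^t+a_2^t=a(d,n)^{(\nu_{\rho'}^k\rho')}+a(e,m)^{(\nu_{\rho'}^l\rho')}$ contains only segments of length $n$ or $m$, and any $\prec$-operation strictly increases the maximal length of the segments present (the key observation already exploited in the proof of Lemma 6.2). Since $b\ne a_1+a_2$ forces $b^t\ne a_1^t+a_2^t$, at least one $\prec$-operation must intervene in reaching $b^t$ from $a_1^t+a_2^t$, producing in $b^t$ a segment of length strictly greater than $\max(n,m)\ge n$. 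This contradicts the previous bound, so no such $b$ exists and the Badulescu criterion applies.

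The main obstacle is to identify the correct normalization that collapses the beginnings of $a_1+a_2$ to a single $\Z$-segment of length $n$; once that is in place, irreducibility follows cleanly from Lemma 6.1 together with the length estimates of Section 2.8. A by-product of this approach is that the only genuinely delicate combinatorial input is Lemma 6.1, with the rest amounting to a straightforward length-bound chase dual to the one used in Lemma 6.2.
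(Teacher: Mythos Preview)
Your overall strategy (reduce to the crossed case, invoke Lemma~6.1, then verify the Badulescu criterion of \S3.2) matches the paper's, but the key step of your argument contains a genuine gap. The assertion that ``any $\prec$-operation strictly increases the maximal length of the segments present'' is false for $a_1^t+a_2^t$. Under your normalizations one has $A_1\le A_2$ and $C_2<C_1$, hence $m=C_2-A_2+1<C_1-A_1+1=n$; the segments in $a_2^t$ have length $m<n$, and linking two of them produces a segment of length at most $2m-1$, which may well be $\le n$. Thus one can have $b^t<a_1^t+a_2^t$ with every segment of $b^t$ of length $\le n$ (namely, any $b^t=a_1^t+c'$ with $c'<a_2^t$ and all segments of $c'$ of length $\le n$). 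Your length-bound contradiction therefore does not fire, and the proof of Lemma~6.2 does not transfer: there the multiset $a_2^t$ is \emph{homogeneous} of length $m$, so the first linking already exceeds the maximum, whereas $a_1^t+a_2^t$ is not homogeneous.

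What is missing is precisely the substance of the paper's proof. From the bound ``segments of $f^t$ have length $\le n$'' one can only conclude that no segment of $a_1^t$ is ever touched in the chain from $a_1^t+a_2^t$ down to $f^t$, so $f^t=a_1^t+c_t$ with $c_t<a_2^t$; the dual bound using $B_2<B_1$ likewise gives $f=a_1+c$ with $c<a_2$. To finish one must show $(a_1+c)^t=a_1^t+c^t$, which requires a genuine MWA computation (splitting into the cases $D_2<C_1$ and $C_1\le D_2$) to see that each step of the algorithm on $a_1+c$ uses segments coming either entirely from $a_1$ or entirely from $c$. Only then does $c_t=c^t$ follow, yielding $c<a_2$ and $c^t<a_2^t$, which contradicts Lemma~6.2. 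Your write-up skips this computation entirely, and without it the implication $b<a_1+a_2\Rightarrow b^t\not<a_1^t+a_2^t$ is unproved.
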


\begin{proof} In is enough to prove the proposition in the crossed case. We shall assume this, and 
 continue with the previous notation and the previous assumptions: 
$$
B_2<B_1,\quad C_2<C_1,\quad C_1\leq B_1 \quad \text{ (i.e., }n\leq d)
$$
(see 6.2). We illustrate $a_1$ and $a_2$   by the following drawing:
\begin{equation} 
\label{g11}
\xymatrix@C=.6pc@R=.1pc
{ 
   &\bullet  & \bullet  \ar @{-}[l]  & \bullet \ar @{-}[l]  &  \bullet \ar @{-}[l]   & \bullet  \ar @{-}[l]  & \bullet \ar @{-}[l]  & \bullet \ar @{-}[l]   & \bullet \ar @{-}[l]   & \bullet \ar @{-}[l] 
\\ 
&& \bullet   & \bullet \ar @{-}[l]  & \bullet \ar @{-}[l]  
& \bullet \ar @{-}[l]  &  \bullet \ar @{-}[l]   & \bullet  \ar @{-}[l] & \bullet \ar @{-}[l]   & \bullet \ar @{-}[l]   & \bullet \ar @{-}[l] 
\\
& && \bullet   & \bullet \ar @{-}[l]  & \bullet \ar @{-}[l]  & \bullet \ar @{-}[l]  &  \bullet \ar @{-}[l]   & \bullet  \ar @{-}[l] & \bullet \ar @{-}[l]   & \bullet \ar @{-}[l]   & \bullet \ar @{-}[l] 
\\
   & &&& \bullet   & \bullet \ar @{-}[l]  & \bullet \ar @{-}[l]   & \bullet \ar @{-}[l]  &  \bullet \ar @{-}[l]   & \bullet  \ar @{-}[l] & \bullet \ar @{-}[l]  & \bullet \ar @{-}[l]   & \bullet \ar @{-}[l] 
  \\
& & & && \bullet   & \bullet \ar @{-}[l]  & \bullet \ar @{-}[l]  & \bullet \ar @{-}[l]  &  \bullet \ar @{-}[l]   & \bullet  \ar @{-}[l] & \bullet \ar @{-}[l]  & \bullet \ar @{-}[l]   & \bullet \ar @{-}[l] 
  \\
& && & && \bullet   & \bullet \ar @{-}[l]  & \bullet \ar @{-}[l]  & \bullet \ar @{-}[l]  &  \bullet \ar @{-}[l]   & \bullet  \ar @{-}[l]   & \bullet \ar @{-}[l]  & \bullet \ar @{-}[l]   & \bullet \ar @{-}[l] 
  \\
&& && & && \bullet   & \bullet \ar @{-}[l]  & \bullet \ar @{-}[l]  & \bullet \ar @{-}[l]  &  \bullet \ar @{-}[l]   & \bullet  \ar @{-}[l]   & \bullet \ar @{-}[l]  & \bullet \ar @{-}[l]   & \bullet \ar @{-}[l] 
\\
 &     &   \bullet &\bullet  \ar @{-}[l] & \bullet  \ar @{-}[l]    & \bullet \ar @{-}[l] 
\\
 &    &  &   \bullet &\bullet  \ar @{-}[l] & \bullet  \ar @{-}[l]    & \bullet \ar @{-}[l] 
\\
 &&&& \bullet   & \bullet  \ar @{-}[l]  & \bullet \ar @{-}[l]   & \bullet \ar @{-}[l] 
 \\
 &&&&  & \bullet& \bullet  \ar @{-}[l]  & \bullet \ar @{-}[l]   & \bullet \ar @{-}[l] 
 \\
  &&&&&&\bullet   & \bullet  \ar @{-}[l]  & \bullet \ar @{-}[l]   & \bullet \ar @{-}[l] 
} 
\end{equation}

Suppose that 
$$
L(a_1)\t L(a_2)
$$
 reduces. Then there exists 
\begin{equation}
\label{<}
f< a_1+a_2
\end{equation}
such that 
$L(f)$ is a subquotient of $ L(a_1)\t L(a_2)$. Therefore
$$
L(f^t)\text{\ is a subquotient of \ } L(a_1^t)\t L(a_2^t).
$$
Thus, $f^t\leq a_1^t+a_2^t$. Observe that $f^t= a_1^t+a_2^t$ would contradict Lemma 6.1 since $f\ne  a_1+a_2$. Thus
\begin{equation}
\label{<t}
f^t< a_1^t+a_2^t.
\end{equation}

In the rest of the proof we fix some
$
f<a_1+a_2
$
satisfying
$
f^t< a_1^t+a_2^t.
$

Since $C_2<C_1$, the beginnings of all segments in $a_1+a_2$ are contained in $[A_1,C_1]^{(\rho)}$. This also holds for $f$ since $f<a_1+a_2$  (since the beginnings of   all segments in $f$ are   contained in the beginnings of all segments in $a_1+a_2$). This implies that lengths of segments in $f^t$ are at most $n$. Since $f^t< a_1^t+a_2^t$, we conclude that no segment from $a_1^t$ can take part in any linking which produces $f^t$. Therefore
$$
f^t=a_1^t+c_t
$$
for some $c_t\in M(\SC)$. Observe that above considerations imply
$$
c_t<a_2^t.
$$

We  now apply the above arguments to $f^t$ instead of $f$, and use the fact  that $B_2<B_1$. In the same way we get for $f=(f^t)^t$ that
$$
f=a_1+c
$$
for some
$$
c<a_2.
$$

We shall now compute $(a_1+c)^t$, i.e. $f^t$. We consider two cases.

The first case is 
$$
D_2< C_1.
$$
Then the first $d$ steps of the algorithm produce segments
$$
[B_1,D_1]^{(\rho)}, [B_1-1,D_1-1]^{(\rho)},\dots, [A_1,C_1]^{(\rho)}.
$$
The multisegment formed by these segments is exactly $a_1^t$. The rest of the algorithm  MWA$^\la$ gives obviously $c^t$. Therefore, 
$$
(a_1+c)^t=a_1^t+c^t.
$$

We shall now show that the same formula holds also in the remaining case
$$
C_1\leq D_2.
$$
The first $D_1-D_2+1$ steps of the algorithm produce segments
$$
[B_1,D_1]^{(\rho)}, [B_1-1,D_1-1]^{(\rho)},\dots, [D_1-D_2+1+B_1-D_1,D_1-D_2+1]^{(\rho)}
$$
(for the last step there were two ends to start MWA$^\la$, one coming from the end of a truncated segment from  $a_1$, and the other from the end of a segment from $a_2$; the condition $C_2<C_1$ implies that we must start with 
the end of a truncated segment from $a_1$).

We illustrate this stage of the algorithm    by the following drawing:
\begin{equation} 
\label{g111}
\xymatrix@C=.6pc@R=.1pc
{ 
   &\bullet  & \bullet  {\ar @{-}[l]}  & \bullet  {\ar @{-}[l]}   & \bullet {\ar @{-}[l]}  &  \bullet {\ar @{-}[l]}   & \bullet  {\ar @{-}[l]}  & \bullet {\ar @{-}[l]}  & \bullet {\ar @{-}[l]}   & \bullet {\ar @{-}[l]} 
\\ 
&& \bullet  & \bullet  {\ar @{-}[l]}   & \bullet {\ar @{-}[l]} \ar[lu]  & \bullet {\ar @{-}[l]} \ar[lu]  
& \bullet {\ar @{-}[l]} \ar[lu]  &  \bullet {\ar @{-}[l]} \ar[lu]   & \bullet  {\ar @{-}[l]} \ar[lu] & \bullet {\ar @{-}[l]} \ar[lu]   & \bullet {\ar @{-}[l]} \ar[lu] 
\\
& && \bullet  & \bullet  {\ar @{-}[l]}   & \bullet {\ar @{-}[l]} \ar[lu]  & \bullet {\ar @{-}[l]} \ar[lu]  & \bullet {\ar @{-}[l]} \ar[lu]  &  \bullet {\ar @{-}[l]} \ar[lu]   & \bullet  {\ar @{-}[l]} \ar[lu] & \bullet {\ar @{-}[l]} \ar[lu]   & \bullet {\ar @{-}[l]} \ar[lu] 
\\
   & &&& \bullet   & \bullet  {\ar @{-}[l]}  & \bullet {\ar @{-}[l]} \ar[lu]  & \bullet {\ar @{-}[l]} \ar[lu]   & \bullet {\ar @{-}[l]} \ar[lu]  &  \bullet {\ar @{-}[l]} \ar[lu]   & \bullet  {\ar @{-}[l]} \ar[lu] & \bullet {\ar @{-}[l]} \ar[lu]  & \bullet {\ar @{-}[l]} \ar[lu] 
  \\
& & & && \bullet   & \bullet  {\ar @{-}[l]}  & \bullet {\ar @{-}[l]} \ar[lu]  & \bullet {\ar @{-}[l]} \ar[lu]  & \bullet {\ar @{-}[l]} \ar[lu]  &  \bullet {\ar @{-}[l]} \ar[lu]   & \bullet  {\ar @{-}[l]} \ar[lu] & \bullet {\ar @{-}[l]} \ar[lu]  & \bullet {\ar @{-}[l]} \ar[lu] 
  \\
& && & && \bullet   & \bullet  {\ar @{-}[l]}  & \bullet {\ar @{-}[l]} \ar[lu]  & \bullet {\ar @{-}[l]} \ar[lu]  & \bullet {\ar @{-}[l]} \ar[lu]  &  \bullet {\ar @{-}[l]} \ar[lu]   & \bullet  {\ar @{-}[l]} \ar[lu]   & \bullet {\ar @{-}[l]} \ar[lu]  & \bullet {\ar @{-}[l]} \ar[lu] 
  \\
&& && & && \bullet   & \bullet  {\ar @{-}[l]}  & \bullet {\ar @{-}[l]} \ar[lu]   & \bullet {\ar @{-}[l]} \ar[lu]  & \bullet {\ar @{-}[l]} \ar[lu]  &  \bullet {\ar @{-}[l]} \ar[lu]   & \bullet  {\ar @{-}[l]} \ar[lu]   & \bullet {\ar @{-}[l]} \ar[lu]  & \bullet {\ar @{-}[l]} \ar[lu] 
\\
 &   &&&  &    \bullet
\\
 &  &  &  &   \bullet  & \bullet  \ar @{-}[l]    & \bullet \ar @{-}[l] 
\\
 &&& \bullet &\bullet  \ar @{-}[l]   & \bullet  \ar @{-}[l]  & \bullet \ar @{-}[l]   & \bullet \ar @{-}[l] 
 \\
 && \bullet &\bullet  \ar @{-}[l] & \bullet  \ar @{-}[l] & \bullet \ar @{-}[l]& \bullet  \ar @{-}[l]  & \bullet \ar @{-}[l]   & \bullet \ar @{-}[l] 
 \\
  &&&&&&\bullet    & \bullet  \ar @{-}[l]  & \bullet \ar @{-}[l]   & \bullet \ar @{-}[l] 
} 
\end{equation}

In our example, after removing points of the segments that we have already used,  we are now in the following situation:
\begin{equation} 
\label{g1111}
\xymatrix@C=.6pc@R=.1pc
{ 
   &\bullet   & \bullet  \ar @{-}[l]  
\\ 
&& \bullet    & \bullet  \ar @{-}[l]  
\\
& && \bullet    & \bullet  \ar @{-}[l]  
\\
   & &&& \bullet    & \bullet  \ar @{-}[l]  
     \\
& & & && \bullet    & \bullet  \ar @{-}[l]  
  \\
& && & && \bullet    & \bullet  \ar @{-}[l]  
  \\
&& && & && \bullet    & \bullet  \ar @{-}[l]  
\\
 &     &   &&  & \bullet  
\\
 &  &  &  &   \bullet  & \bullet  \ar @{-}[l]   & \bullet  \ar @{-}[l]  
\\
 &&& \bullet &\bullet  \ar @{-}[l]   & \bullet  \ar @{-}[l]  & \bullet \ar @{-}[l]  & \bullet  \ar @{-}[l]  
 \\
 && \bullet &\bullet  \ar @{-}[l] & \bullet  \ar @{-}[l] & \bullet \ar @{-}[l]& \bullet  \ar @{-}[l]  & \bullet \ar @{-}[l]  & \bullet  \ar @{-}[l]  
 \\
  &&&&&&\bullet    & \bullet  \ar @{-}[l]  & \bullet \ar @{-}[l]  & \bullet  \ar @{-}[l]  
} 
\end{equation}

We must start the next step  with the end $D_2$ of the segment  from $c$. Denote this segment by $[X,D_2]$. This segment is longer than $[C_1,D_2]$ since $X\leq C_2<C_1$. Therefore, in all further stages of MWA$^\la$ starting with this point, we will have segments of the same length as $[X,D_2]$, or longer. Therefore, at this step, only the segments from $c$ will take part (see Remark \ref{rem-alg}).

Now we go to $D_2-1$. Then we need to start with a segment coming from $a_1$, since it is shorter. In the same way as before,  we now complete this step with segments coming from $a_1$.

We  continue these steps with possible beginnings  $D_2-1, D_2-2$, \dots, $C_1$ (possibly several times with each of them). In the same way as above, at each step we shall deal either with segments coming from $a_1$, or with segments coming from $c$ (if we start with the end of a segment coming from $a_1$, this is clear; for the end of a segment coming from $c$, $C_2<C_1$ implies that we must complete such step with segments coming from $c$).

Further, after $C_1$, the remaining steps take part only inside segments coming from $c$.

The above discussion implies that final result of the algorithm in this case is also 
$$
(a_1+c)^t=a_1^t+c^t
$$
because each step of the algorithm can be performed  using the segments coming either entirely from $a_1$ or entirely from $c$.

Now we can complete the proof.  We have just proved  $f^t=a_1^t+c^t$. Recall that $f^t=a_1^t+c_t$. This implies 
$$
c_t=c^t.
$$
 Further, we know from earlier
$$
c<a_2
$$
 and 
 $
 c_t<a_2^t.
 $
  Now $c_t=c^t$ implies
 $$
 c^t<a_2^t.
 $$
The preceding lemma now  implies a contradiction. 
The proof of Proposition 6.4 is now  complete.
\end{proof}

\begin{remark} 
A. M\'inguez has explained us  how to  get in a simple way the above proposition from Lemma 1.2 of \cite{BLM}  (providing in this way a proof in terms of Jacquet modules). The case when one representation is cuspidal follows from \cite{M}. The general case follows by induction. For the induction, one needs to check the multiplicity one of the inducing representation in the  Jacquet module of the induced representation, to be able to apply Lemma 1.2 of \cite{BLM}.
\end{remark}
\label{rem-Min}

\section{Reducibility criterion}

\subsection{The case of two essentially Speh representations}

Propositions 5.1 and 6.3 imply the following 

\begin{theorem}
Let $\pi_1=u_{ess}(\d_1,m_1)$ and $\pi_2=u_{ess}(\d_2,m_2)$ for some $\d_1,\d_2\in \C D$ and some positive integers $m_1,m_2$. Then
$$
\pi_1\t\pi_2
$$
 is reducible if and only if the underlying sets of the cuspidal supports of $\pi_1$ and $\pi_2$ are  linked and $\pi_1$ and $\pi_2$ are  crossed.
\end{theorem}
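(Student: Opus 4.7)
The plan is to reduce the statement to Propositions 5.1 and 6.3 by a clean case analysis on the relative position of the underlying sets of cuspidal supports of $\pi_1$ and $\pi_2$; all the substantive work has already been carried out in those propositions.

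First I handle the case in which $\text{supp}(\pi_1)$ and $\text{supp}(\pi_2)$ are not contained in a common cuspidal line. Then every constituent segment of the multisegment $a_1$ parameterising $\pi_1$ lies in a different cuspidal line from every constituent segment of $a_2$, so no two such segments can be juxtaposed; the same holds for the segments of $a_1$ versus those of $a_2^t$, which occupy the same cuspidal lines as the segments of $a_2$. Consequently $a_1$ is not in contact with $a_2^t$ (nor with $a_2$), and \eqref{BLM'} forces $\pi_1\times\pi_2$ to be irreducible. The ``underlying supports linked'' hypothesis on the right-hand side fails as well, so the equivalence holds vacuously.

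In the remaining case I may normalize $\pi_i=L(a(n_i,d_i)^{(\nu_\rho^{k_i}\rho)})$ for a common $\rho\in\mathcal{C}$, as in Sections \ref{linked} and \ref{non-linked}, and I let $S_1,S_2$ denote the underlying support segments of $\pi_1,\pi_2$. Exactly three mutually exclusive configurations arise:
\begin{enumerate}
\item[(a)] $S_1$ and $S_2$ are linked --- then Proposition 5.1 gives directly that $\pi_1\times\pi_2$ reduces if and only if $\pi_1$ and $\pi_2$ are crossed, which is exactly the theorem under this hypothesis;
\item[(b)] one of $S_1,S_2$ contains the other (including the case of equality) --- then Proposition 6.3 gives irreducibility, and by the definition of ``linked'' the right-hand side of the equivalence is also false;
\item[(c)] $S_1\cap S_2=\emptyset$ and $S_1\cup S_2\notin\mathcal{S}(\mathcal{C})$ --- then the gap between $S_1$ and $S_2$ prevents any segment of $a_1$ from being juxtaposable with any segment of $a_2$ (or of $a_2^t$, whose segments sit inside $S_2$), so \eqref{BLM'} again gives irreducibility while ``linked'' fails.
\end{enumerate}

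The only point requiring any attention is the verification in case (c) and in the different-lines case that non-contact is automatic, but this is immediate from the fact that every constituent segment of $a_i$ sits inside $S_i$ together with the definition of juxtaposition. There is therefore no real obstacle: the entire analytic/combinatorial content of the theorem is already encapsulated in Propositions 5.1 and 6.3, and the present statement is their clean reformulation after the trivial cases are absorbed.
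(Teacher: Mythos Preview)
Your proposal is correct and follows essentially the same approach as the paper: the paper simply states that Propositions 5.1 and 6.3 imply the theorem, and your case analysis makes explicit the reduction to those two propositions (linked supports handled by 5.1, containment by 6.3) together with the trivial ``no interaction'' cases, which the paper absorbs into the remark in 4.2 that irreducibility is automatic when the union of underlying supports is not a segment.
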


We now  explain when the reducibility happens in a different way. 

First, to have reducibility, we need to have both  representations $\pi_1$ and $\pi_2$ supported on the same cuspidal $\Z$-line, i.e., in $\{\nu^k_\rho \rho; k\in \Z\}$ for some $k\in \C C$. We  assume this in what follows.

Write
\begin{equation}
\label{1}
\pi_i=L(([A_i,B_i],[A_i+1,B_i+1],\dots,[C_i,D_i])^{(\rho)}), \ i=1,2.
\end{equation}
We can always chose $\rho$ so that $A_i\in \Z$.
Here
$$
A_i\leq B_i,C_i \leq D_i
$$
and
$$
A_i+D_i=B_i+C_i
$$
for $i=1,2$. 
We use the shorthand
$$
\pi_i=u_{ess}
\left(
\begin{matrix}
A_i  \hskip3mm & B_i  \hskip3mm
\\
    \hskip3mm C_i&    \hskip3mm D_i
\end{matrix}
 \right)^{(\rho)}
$$

To have reducibility, the linking condition for the underlying sets of the cuspidal supports must be satisfied. This implies $A_1\ne A_2$. Without lost of generality we can assume
\begin{equation}
\label{2}
A_1<A_2.
\end{equation}
Now, the linking condition is equivalent to
\begin{equation}
\label{3}
D_1<D_2
\end{equation}
and
\begin{equation}
\label{4}
A_2\leq D_1+1.
\end{equation}

The crossing condition is now equivalent to
$$
B_1<B_2 \text{ and } C_1< C_2.
$$

Write 
$$
\left(
\begin{matrix}
A \hskip3mm & B  \hskip3mm
\\
    \hskip3mm C&    \hskip3mm D
\end{matrix}
 \right)
 <_{strong}
 \left(
\begin{matrix}
A_1  \hskip3mm & B_1  \hskip3mm
\\
    \hskip3mm C_1&    \hskip3mm D_1
\end{matrix}
 \right)
 $$
 if $A<A_1,B<B_1,C<C_1$ and $D<D_1$. 
 
 Let us summarize: If  $\pi_1$ and $\pi_2$ are not supported on the same cuspidal $\Z$-lines, then $\pi_1\t\pi_2$ is irreducible.
If they are supported by the same cuspidal $\Z$-line, then we can write them as
$$
\pi_i=u_{ess}
\left(
\begin{matrix}
A_i  \hskip3mm & B_i  \hskip3mm
\\
    \hskip3mm C_i&    \hskip3mm D_i
\end{matrix}
 \right)^{(\rho)}
$$
where $A_i,B_i,D_i,D_i\in \Z$, 
$$
A_i\leq B_i,C_i \leq D_i
$$
and
$$
A_i+D_i=B_i+C_i
$$
for $i=1,2$.

\begin{theorem} The representation 
\begin{equation}
\label{repr}
\pi_1\t \pi_2=u_{ess}
\left(
\begin{matrix}
A_1  \hskip3mm & B_1 \hskip3mm
\\
    \hskip3mm C_1 &    \hskip3mm D_1
\end{matrix}
 \right)^{(\rho)}
 \t
 u_{ess}
\left(
\begin{matrix}
A_2  \hskip3mm & B_2  \hskip3mm
\\
    \hskip3mm C_2 &    \hskip3mm D_2
\end{matrix}
 \right)^{(\rho)}
 \end{equation}
   reduces  if and only if 
  $$
  [A_1,D_1]_\Z\cup[A_2,D_2]_\Z  
$$
is a $\Z$-segment, and
$$
\left(
\begin{matrix}
A_1  \hskip3mm & B_1  \hskip3mm
\\
    \hskip3mm C_1&    \hskip3mm D_1
\end{matrix}
 \right)
 <_{strong}
 \left(
\begin{matrix}
A_2  \hskip3mm & B_2  \hskip3mm
\\
    \hskip3mm C_2&    \hskip3mm D_2
\end{matrix}
 \right)
\text{ \ or \ }
\left(
\begin{matrix}
A_2  \hskip3mm & B_2  \hskip3mm
\\
    \hskip3mm C_2&    \hskip3mm D_2
\end{matrix}
 \right)
  <_{strong}
\left(
\begin{matrix}
A_1  \hskip3mm & B_1  \hskip3mm
\\
    \hskip3mm C_1&    \hskip3mm D_1
\end{matrix}
 \right)
.
$$
\end{theorem}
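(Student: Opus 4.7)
The plan is to derive Theorem~7.2 from Theorem~7.1 by translating the two geometric conditions ``underlying cuspidal supports are linked'' and ``$\pi_1,\pi_2$ are crossed'' into arithmetic inequalities on the parameters $(A_i,B_i,C_i,D_i)$. Since the reducibility of $\pi_1\t\pi_2$ is symmetric in its factors, I may assume throughout that $A_1\leq A_2$.

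First I translate the linking of the supports. The underlying set of the cuspidal support of $\pi_i$ is $\{\nu_\rho^k\rho : k\in [A_i,D_i]_\Z\}$, so the two supports are linked in $\C C$ exactly when the $\Z$-segments $[A_1,D_1]_\Z$ and $[A_2,D_2]_\Z$ are linked, i.e., their union is a $\Z$-segment and neither is contained in the other. Under $A_1\leq A_2$, this is equivalent to the three inequalities $A_1<A_2$, $D_1<D_2$, $A_2\leq D_1+1$.

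Second, I translate the crossing condition. The Zelevinsky involution swaps the parameters $B_i$ and $C_i$: from $L(a(n,d)^{(\rho)})^t=L(a(d,n)^{(\rho)})$, the identification $n=C-A+1$, $d=B-A+1$, and $A+D=B+C$, one checks that $\pi_i^t=u_{ess}\bigl(\begin{smallmatrix}A_i & C_i \\ B_i & D_i\end{smallmatrix}\bigr)^{(\rho)}$. Hence ``crossed'' (i.e., $\pi_1$ contacts $\pi_2^t$ and $\pi_1^t$ contacts $\pi_2$) translates, via Lemma~4.1 applied under the linking hypothesis, to nonempty intersection of the corresponding dashed segments,
\[
[B_1+1,D_1+1]\cap[A_2,B_2]\ne\emptyset\quad\text{and}\quad[C_1+1,D_1+1]\cap[A_2,C_2]\ne\emptyset.
\]
Since $A_2\leq D_1+1$, these simplify to $B_1<B_2$ and $C_1<C_2$ respectively. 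Combined with $A_1<A_2$ and $D_1<D_2$, this is precisely $\bigl(\begin{smallmatrix}A_1 & B_1 \\ C_1 & D_1\end{smallmatrix}\bigr)<_{strong}\bigl(\begin{smallmatrix}A_2 & B_2 \\ C_2 & D_2\end{smallmatrix}\bigr)$.

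For the converse, if $[A_1,D_1]_\Z\cup[A_2,D_2]_\Z$ is a $\Z$-segment and, after a possible swap of indices, $\pi_1<_{strong}\pi_2$, then $A_1<A_2$ together with the segment condition yields $A_2\leq D_1+1$, so the supports are linked; and $B_1<B_2$, $C_1<C_2$ give crossed. Theorem~7.1 then produces reducibility. I do not expect any real obstacle here: Theorem~7.2 is a reformulation of Theorem~7.1, so the task is purely to check that the strict inequalities of $<_{strong}$ package the linking-and-crossing data exactly, and the $B\leftrightarrow C$ description of Zelevinsky duality makes this immediate.
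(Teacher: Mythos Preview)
Your proposal is correct and follows essentially the same route as the paper: the text between Theorem~7.1 and Theorem~7.2 carries out exactly this translation (assuming $A_1<A_2$, rewriting the linking of supports as $D_1<D_2$ and $A_2\le D_1+1$, and using Lemma~4.1 together with the $B\leftrightarrow C$ effect of Zelevinsky duality to reduce crossing to $B_1<B_2$ and $C_1<C_2$). The only thing you make slightly more explicit than the paper is the verification that $\pi_i^t$ has parameters $(A_i,C_i,B_i,D_i)$, which the paper uses implicitly via the identity $(a(n,d)^{(\rho)})^t=a(d,n)^{(\rho)}$.
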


\subsection{General case} The following technical lemma follows very easily from  Theorem 7.2.

\begin{lemma}
Let the representations
$$
\pi_i=u_{ess}
\left(
\begin{matrix}
A_i  \hskip3mm & B_i \hskip3mm
\\
    \hskip3mm C_i &    \hskip3mm D_i
\end{matrix}
 \right)^{(\rho)},\qquad i=1,2,
 $$
 satisfy:
 \begin{enumerate}

\item $\pi_1\t\pi_2$ is irreducible;

\item $C_2+D_2\leq C_1+D_1$;

\item if $C_2+D_2= C_1+D_1$, then $1\leq D_1-B_1\leq D_2-B_2$.
 
 \end{enumerate}
 Denote 
 $$
 C_1'=C_1-1, \qquad D_1'=D_1-1 \text{\qquad and \qquad}\pi_1'=
 u_{ess}
\left(
\begin{matrix}
A_1  \hskip3mm & B_1 \hskip3mm
\\
    \hskip3mm C_1' &    \hskip3mm D_1'
\end{matrix}
 \right)^{(\rho)}
 $$
 Then
 \begin{equation}
 \label{-1}
 \pi_1'\t
 \pi_2
 \end{equation}
 is irreducible.

\end{lemma}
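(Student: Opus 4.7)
The plan is to argue by contradiction through Theorem 7.2. I would assume $\pi_1' \t \pi_2$ reduces; then that theorem asserts (i) $[A_1, D_1-1]_{\Z} \cup [A_2, D_2]_{\Z}$ is a $\Z$-segment, and (ii) one of $\pi_1' <_{strong} \pi_2$ or $\pi_2 <_{strong} \pi_1'$ holds. I would then split according to the direction of $<_{strong}$ in (ii).

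In the case $\pi_1' <_{strong} \pi_2$, the relations $C_1 - 1 < C_2$ and $D_1 - 1 < D_2$ only yield the weak inequalities $C_1 \leq C_2$ and $D_1 \leq D_2$. Combining these with hypothesis (2), $C_2 + D_2 \leq C_1 + D_1$, forces $C_1 = C_2$ and $D_1 = D_2$, placing us in the equality regime of hypothesis (3). That hypothesis then gives $D_1 - B_1 \leq D_2 - B_2$, which rearranges (using $D_1 = D_2$) to $B_2 \leq B_1$, contradicting the strict inequality $B_1 < B_2$ that is part of $\pi_1' <_{strong} \pi_2$.

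In the case $\pi_2 <_{strong} \pi_1'$, the inequalities $C_2 < C_1 - 1$ and $D_2 < D_1 - 1$ trivially imply $C_2 < C_1$ and $D_2 < D_1$; together with the unchanged $A_2 < A_1$ and $B_2 < B_1$, this is exactly $\pi_2 <_{strong} \pi_1$. The segment assumption (i), with $A_2 < A_1$, further forces $A_1 \leq D_2 + 1$, so $[A_1, D_1]_{\Z} \cup [A_2, D_2]_{\Z} = [A_2, D_1]_{\Z}$ is also a $\Z$-segment. Theorem 7.2 would then conclude that $\pi_1 \t \pi_2$ reduces, contradicting hypothesis (1).

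The main subtle point is the first case: the $-1$ shift weakens the would-be strict inequalities on $C$ and $D$ into $\leq$, and hypothesis (3) is precisely what is needed to close the gap by transferring a contradiction onto the $B$-coordinate. The second case is essentially automatic once one notices that the strict inequality $C_2 < C_1 - 1$ survives the shift. Once both branches are ruled out, the desired irreducibility follows.
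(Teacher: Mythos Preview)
Your proof is correct and follows essentially the same approach as the paper: both argue by contradiction via Theorem 7.2, split on the direction of $<_{strong}$, use hypothesis (2) to force $C_1=C_2$, $D_1=D_2$ in the case $\pi_1'<_{strong}\pi_2$ and then invoke hypothesis (3) to contradict $B_1<B_2$, and in the case $\pi_2<_{strong}\pi_1'$ observe that the strict inequalities survive the shift to give $\pi_2<_{strong}\pi_1$, contradicting hypothesis (1). The only organizational difference is that the paper disposes of the non-segment case at the outset and records $[A_1,D_1]_\Z\cup[A_2,D_2]_\Z$ is a segment before the case split, whereas you verify this inside the second case; the logic is the same.
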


\begin{proof} If $[A_2,D_2]\cup [A_1,D_1-1]$ is not a $\Z$-segment, then directly follows that \eqref{-1} is irreducible. Therefore, we suppose that $[A_2,D_2]\cup [A_1,D_1-1]$ is a segment. Then clearly, $[A_2,D_2]\cup [A_1,D_1]$ is a $\Z$-segment. 

Denote
$$
M_i=\left(
\begin{matrix}
A_i  \hskip3mm & B_i \hskip3mm
\\
    \hskip3mm C_i &    \hskip3mm D_i
\end{matrix}
 \right), \ \ i=1,2,
 \text{\qquad and \qquad}
 M_1'=
 \left(
\begin{matrix}
A_1  \hskip3mm & B_1 \hskip3mm
\\
    \hskip3mm C_1' &    \hskip3mm D_1'
\end{matrix}
 \right).
$$
The condition (1) of the lemma and the previous theorem imply
\begin{equation}
\label{ir}
M_1
 \not<_{strong}
 M_2
\text{ \ and \ }
M_2
 \not <_{strong}
M_1.
  \end{equation}
  Suppose that \eqref{-1} is reducible. Then the above theorem  imples
  $$
  M_1
 <_{strong}
 M_2'
\text{ \ or \ }
M_2'
  <_{strong}
M_1.
  $$
Suppose
$ M_2
 <_{strong}
 M_1'$. Then 
 $$
 A_2<A_1,\quad B_2<B_1,\quad C_2<C_1-1,\quad D_2<D_1-1.
 $$
 Then obviously $C_2<C_1$ and $D_2<D_1$, which implies $M_2<_{strong} M_1$. This  contradicts  \eqref{ir}. 

Therefore
$ M_1'
 <_{strong}
 M_2$. This implies
$
 A_1<A_2, B_1<B_2, C_1-1<C_2, D_1-1<D_2.
$
 i.e.
 \begin{equation}
\label{nejedakost}
 A_1<A_2,\quad B_1<B_2,\quad C_1\leq C_2,\quad D_1\leq D_2.
 \end{equation}
 Now the last two inequalities and condition (2) from the lemma imply that
$$
C_1=C_2\text{\ \ and \ \ } D_1=D_2.
$$
Now $B_1<B_2$ implies 
\begin{equation}
\label{pomocno}
D_2-B_2<D_1-B_1
\end{equation}
 (since $D_1=D_2$).

From the other side,  $C_1=C_2$ and $D_1=D_2$ yield $C_1+D_1=C_2+D_2$. Because of this, we can apply the condition (3) of the lemma, which says 
$$
1\leq D_1-B_1\leq D_2-B_2.
$$
 This contradicts \eqref{pomocno}. The proof is now complete.
 \end{proof}

The claim of the following lemma is essentially contained in I.9 of \cite{MoeW-GL} (the proof in \cite{MoeW-GL} is based on section I.8 there). The proof that we present bellow is very elementary (it uses the strategy of proof of Proposition 8.5 from \cite{Z}, which was also used in \cite{MoeW-GL}).

\begin{lemma} Let $\pi_i=L(d_i), i=1,\dots,k$, be essentially Speh representations such that
$$
\pi_i\t\pi_j
$$
is irreduciblele for all (different) $i,j\in\{1,\dots,k\}$. Let $\s$ be a permutation of $\{1,\dots,k\}$. Then 
\begin{enumerate}

\item $\pi_1\t\pi_2\t\dots\t\pi_k\cong \pi_{\s(1)}\t\pi_{\s(2)}\t\dots\t\pi_{\s(k)};$

\item $\pi_1\t\pi_2\t\dots\t\pi_k$ is a quotient of $\lambda(d_1+\dots+d_k)$;

\item $\pi_1\t\pi_2\t\dots\t\pi_k$ has unique irreducible quotient. The irreducible quotient is isomorphic to $L(d_1+\dots+d_k)$. It has multiplicity one in $\pi_1\t\pi_2\t\dots\t\pi_k$.

\end{enumerate}
\end{lemma}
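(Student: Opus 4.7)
The plan is to prove (1) via the commutativity of the Grothendieck ring, and then to deduce (2) and (3) by induction on $k$ modelled on the strategy of Proposition 8.5 of \cite{Z}.

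\textbf{Proof of (1).} The ring $R$ from Section 2.2 is commutative, so $[\pi_i\t\pi_{i+1}]=[\pi_{i+1}\t\pi_i]$ in $R$. Since $\pi_i\t\pi_{i+1}$ is irreducible by hypothesis and $R$ is a free $\Z$-module on irreducible classes, $\pi_{i+1}\t\pi_i$ is also irreducible and isomorphic to $\pi_i\t\pi_{i+1}$. Combined with the associativity \eqref{asso} of $\t$, any transposition of two adjacent factors inside $\pi_1\t\cdots\t\pi_k$ preserves the isomorphism class. Since adjacent transpositions generate $S_k$, iteration gives (1).

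\textbf{Proof of (2) and (3).} We argue by induction on $k$; the base case $k=1$ is the definition of the Langlands quotient, since $\pi_1=L(d_1)$ is by construction the unique irreducible quotient of $\lambda(d_1)$, occurring with multiplicity one. For the inductive step, I would use (1) to reorder $(\pi_1,\dots,\pi_k)$ in a convenient way: choose $\pi_k$ so that appending a Langlands enumeration of $d_k$ to a Langlands enumeration of $d_1+\cdots+d_{k-1}$ yields a valid Langlands enumeration of $d_1+\cdots+d_k$, realizing the isomorphism
$$
\lambda(d_1+\cdots+d_{k-1})\t\lambda(d_k)\;\cong\;\lambda(d_1+\cdots+d_k).
$$
Granting this, the induction hypothesis provides $\lambda(d_1+\cdots+d_{k-1})\twoheadrightarrow\pi_1\t\cdots\t\pi_{k-1}$, and this together with $\lambda(d_k)\twoheadrightarrow\pi_k$ and the exactness of parabolic induction yields
$$
\lambda(d_1+\cdots+d_k)\twoheadrightarrow\pi_1\t\cdots\t\pi_k,
$$
which is (2). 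Part (3) is then immediate: any irreducible quotient of $\pi_1\t\cdots\t\pi_k$ is an irreducible quotient of $\lambda(d_1+\cdots+d_k)$, hence isomorphic to $L(d_1+\cdots+d_k)$, and the multiplicity-one statement transfers from the Langlands quotient of $\lambda(d_1+\cdots+d_k)$ (where it is part of the classification) to any quotient.

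\textbf{Main obstacle.} The delicate step is exhibiting an ordering of the $\pi_i$'s for which $\lambda(d_1+\cdots+d_{k-1})\t\lambda(d_k)\cong\lambda(d_1+\cdots+d_k)$ holds as genuine representations and not merely as classes in $R$. Since the exponents of the essentially square-integrable constituents across distinct $d_i$'s can interleave, a naive block concatenation need not produce a Langlands-ordered enumeration. I expect to exploit the arithmetic-progression structure of Speh multisegments together with the pairwise irreducibility of $\pi_i\t\pi_j$ (and hence the restrictions from Theorem 7.2) to select, at each inductive step, a $\pi_k$ that can safely be "peeled off" — for instance, by choosing $\pi_k$ so that some $\delta\in d_k$ has globally minimal exponent and verifying that the remaining enumeration stays Langlands-ordered. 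This combinatorial selection is where the pairwise hypothesis is genuinely used and is the only non-trivial part of the argument.
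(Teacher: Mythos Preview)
Your proof of (1) and the deduction of (3) from (2) are correct and match the paper. The real issue is (2), and you have correctly located the obstacle but not resolved it.

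Your induction is on $k$, peeling off an entire $\pi_k$ so that $\lambda(d_1+\cdots+d_{k-1})\t\lambda(d_k)\cong\lambda(d_1+\cdots+d_k)$. For this to be a genuine isomorphism of representations (not merely an equality in $R$), you need the maximal exponent among the segments of $d_k$ to be at most the minimal exponent among the segments of $d_1+\cdots+d_{k-1}$. But each Speh multisegment $d_i$ already carries a whole arithmetic progression of exponents, and in general no single $d_k$ has its entire exponent range below all the others. The pairwise irreducibility hypothesis (via Theorem 7.2) rules out many configurations, but it does not force the existence of such a $\pi_k$; so the ``peel off one factor'' strategy, as stated, does not go through.

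The paper's argument sidesteps this by inducting not on $k$ but on the \emph{cardinality} of $d_1+\cdots+d_k$, and peeling off a single segment rather than a whole factor. One selects an index $i_0$ (renamed $1$) whose top segment $\d_1=\d([C_1,D_1]^{(\rho)})$ has globally maximal exponent (with a tie-breaking rule on $D_1-B_1$). If $\pi_1$ is already a single segment ($B_1=D_1$), one is in your situation and concatenation works. Otherwise one writes $\d_1\t\pi_1'\twoheadrightarrow\pi_1$ with $\pi_1'$ the Speh representation for $d_1$ with its top segment removed, and the crucial step is that $\pi_1'\t\pi_j$ is still irreducible for every $j\ge2$; this is exactly the content of the preceding technical Lemma (Lemma 7.3), which is where Theorem 7.2 enters. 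Then the inductive hypothesis applies to $(\pi_1',\pi_2,\dots,\pi_k)$, and since $\d_1$ has maximal exponent, $\d_1\t\lambda(d_1'+d_2+\cdots+d_k)\cong\lambda(d_1+\cdots+d_k)$, which gives (2). In short, the missing idea in your proposal is to shrink one Speh representation by one segment at a time and to invoke Lemma 7.3 to preserve the pairwise irreducibility hypothesis across the induction.
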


\begin{proof} The claim (1) follows  directly from the basic property \eqref{ass} of $\t$ (and the fact that each permutation is a product of transpositions). Further, (3) is a direct consequence of (2). Therefore, it remains to prove (2). We shall do this by induction with respect to cardinality of the multi set $d_1+\dots+d_k$.

Observe that for $k\leq 2$, $\pi_1\t\pi_2\t\dots\t\pi_k$ is irreducible, which implies $\pi_1\t\pi_2\t\dots\t\pi_k=L(d_1+\dots+d_k)$. Therefore, obviously (3) holds in this situation. This provides the basis for the induction.

Fix now $m\geq 3$ and let $\pi_i=L(d_i), i=1,\dots,k$, be essentially Speh representations such that
$
\pi_i\t\pi_j
$
is irreduciblele for all (different) $i,j\in\{1,\dots,k\}$ and that cardinality of $d_1+\dots+d_k$ is $m$. Suppose that (2) holds in the case of cardinality $m-1$.

First, it is enough to consider the case of $k\geq3$. Further, it is enough to consider the case when we can write all $\pi_i$ as
$$
\pi_i=u_{ess}
\left(
\begin{matrix}
A_i  \hskip3mm & B_i \hskip3mm
\\
    \hskip3mm C_i &    \hskip3mm D_i
\end{matrix}
 \right)^{(\rho)},\qquad i=1,2,\dots,k.
 $$
 Consider all indexes $j$ with maximal $C_j+D_j$, and among them,  chose an index with minimal  $D_j-B_j$. Denote it by $i_0$.
 After renumeration of representations $\pi_i$, we can assume that $i_0=1$. Therefore, we can assume that the following  holds
 
 \begin{enumerate}
  \item[(i)] $C_1+D_1\geq C_i+D_i,\quad  2\leq i\leq k;$

\item[(ii)] if $C_1+D_1= C_i+D_i$ for some $i\in\{1,2,\dots,k\}$, then 
$ D_1-B_1\leq D_i-B_i$.
 \end{enumerate}
 We shall now complete the proof of the lemma. First consider the case $B_1=D_1$. By the inductive assumption, we have epimorphism
 $$
 \lambda(d_2+\dots+d_k) \twoheadrightarrow \pi_2\t\dots\t\pi_k.
 $$
This implies that we have an epimorphism
$$
 \pi_1\t\lambda(d_2+\dots+d_k) \twoheadrightarrow  \pi_1\t\pi_2\t\dots\t\pi_k.
 $$
The fact that $B_1=D_1$ and (i) imply that 
$$
\pi_1\t\lambda(d_2+\dots+d_k)\cong \lambda(d_1+d_2+\dots+d_k).
$$
Therefore, (2) holds in this case.

It remains to consider the case
$$
1\leq D_1-B_1.
$$
 Denote 
 $$
 C_1'=C_1-1, \qquad D_1'=D_1-1.
 $$
 and
 \begin{equation*}
 \pi_1'=
 u_{ess}
\left(
\begin{matrix}
A_1  \hskip3mm & B_1 \hskip3mm
\\
    \hskip3mm C_1' &    \hskip3mm D_1'
\end{matrix}
 \right)^{(\rho)}.
 \end{equation*}
 Denote by $d_1'$ the multi set which satisfies 
 $$
 \pi_1'=L(d_1').
$$ 
 Now by previous lemma,
 $$
 \pi_1'\t \pi_i
 $$
 is irreducible for all $2\leq i\leq k$. Denote
 $$
 \d_1=\d([C_1,D_1]^{(\rho)}).
 $$
 
 Now we have an epimorphism
 $$
\lambda(d_1)\cong \d_1\t\lambda(d_1') \twoheadrightarrow \d_1\t\pi_1'.
 $$
 Therefore, we have an epimorphism
 $$
 \d_1\t\pi_1' \twoheadrightarrow \pi_1,
 $$
 since $\lambda(d_1)$ has a unique irreducible quotient, and that quotient is $\pi_1$.
 Further, we have an epimorphism
 \begin{equation}
 \label{pomocno2}
 \d_1\t\pi_1' \t\pi_2\t\dots\t\pi_k \twoheadrightarrow \pi_1\t\pi_2\t\dots\t\pi_k.
 \end{equation}

 By the inductive assumption we have an epimorphism
 $$
 \lambda(d_1'+d_2+\dots+d_k) \twoheadrightarrow \pi_1'\t\pi_2\t\dots\t\pi_k.
 $$
 Therefore, we have an epimorphism
 $$
 \d_1\t\lambda(d_1'+d_2+\dots+d_k) \twoheadrightarrow \d_1\t\pi_1'\t\pi_2\t\dots\t\pi_k.
 $$
 Observe that by our choice in (i), 
 $$
 \d_1\t\lambda(d_1'+d_2+\dots+d_k)\cong \lambda(d_1+d_2+\dots+d_k).
 $$
 Then the last two relations and \eqref{pomocno2} imply the claim (2) from the lemma.
 The proof is now complete.
\end{proof}

In the same way as A. Zelevinsky proved Proposition 8.5 from \cite{Z} (looking also the contragredient setting),  follows the next theorem   from previous lemma\footnote{Proposition in I.9 of \cite{MoeW-GL} is  closely related to this theorem (in the field case)}.

\begin{theorem} Suppose that we have essentially Speh representations $\pi_1,\dots,\pi_k$. Then 
$$
\pi_1\t\dots\t\pi_k
$$
is irreducible if and only if  the representations
$$
\pi_i\t\pi_j
$$
are irreducible for all $1\leq i<j\leq l.$
\qed
\end{theorem}

Let $P$ be a parabolic subgroup  of $GL(n,\C A)$ with a Levi decomposition $P=MN$,  let $\pi$ be an irreducible unitary representation of $M$ and let $\varphi$ be a (not necessarily unitary) character of $M$. Then Theorem 2.3 implies that the parabolically induced representation
$$
\text{Ind}_P^{GL(n,\C A)}(\varphi\pi)
$$
is equivalent to a representation $\pi_1\t\dots\t\pi_k$ considered in the above theorem.
Therefore, the above theorem gives an explicit necessary and sufficient condition for the  representation of type $\text{Ind}_P^{GL(n,\C A)}(\varphi\pi)$ to be irreducible.

\section{Relation with a result of C. M\oe glin and J.-L. Waldspurger}

In this section we shall recall of the   (sufficient) irreducibility criterion for representations \eqref{repr}  in the case of Speh representations,   obtained in Lemma I.6.3 of \cite{MoeW-GL}.
We follow the notation of \cite{MoeW-GL}, and  assume in this section that $\C A$ is a  field  (non-commutative division algebras are not considered in  \cite{MoeW-GL}). 

\subsection{Some notation}
Let $\d\in \DS^u$.
As in \cite{MoeW-GL}, write
$$
\d[s]=\nu^s\d.
$$
Consider the following two parameters attached to $\d$, the (unitarizable) cuspidal representation $\rho$ and $t\in (1/2)\Z_{\geq 0}$:
$$
\d=
\d([-t,t]^{(\rho)}).
$$
Let $a,b\in\R$ such that $b-a\in \Z_{\geq 0}$. Let 
$$
J(\d,a,b)=L(\nu^a\d, \nu^{a+1}\d,\dots,\nu^{b}\d)=L(\d[a], \d[a+1],\dots,\d[b])
$$
$$
=L(\d([-t,t]^{(\rho)})[a], \d([-t,t]^{(\rho)})[a+1],\dots,\d([-t,t]^{(\rho)})[b])
$$
$$
=
L(\d([a-t,a+t]^{(\rho)}), \d([a+1-t,a+1+t]^{(\rho)}),\dots,\d([b-t,b+t]^{(\rho)})).
$$
Therefore for $\d=\d(\rho,2t+1)=\d([-t,t]^{(\rho)})$ we have
\begin{equation}
\label{connection}
J(\d,a,b)=
u_{ess}
\left(
\begin{matrix}
a-t  \hskip3mm & a+t  \hskip3mm
\\
    \hskip3mm b-t&    \hskip3mm b+t
\end{matrix}
\right)
^{(\rho)}.
 \end{equation}
 
 \subsection{Linking condition of \cite{MoeW-GL}} 
  Take two essentially Speh representations  $J(([-t,t]^{(\rho)}),a,b)$ \, and  \, $J(\d([-t',t']^{(\rho')}),a',b')$ as above. Then they are called linked if
 
 \begin{enumerate}
 
 \item $\rho\cong \rho'$;
 
 \item $(a-t)-(a'-t')\in\Z$;
 
 \item $b>b'+|t-t'|, $ \ \ \  $a>a'+|t-t'| $ \ and \ $a-b'\leq 1+t+t'$, 
 \qquad
 or 
 \\
 $b'>b+|t-t'|, \ \ \ $ $a'>a+|t-t'| $ \ and \ $a'-b\leq 1+t+t'$.
 
 \end{enumerate}
 
\subsection{Irreducibility result of C. M\oe glin and J.-L. Waldspurger} Now we recall of a claim in (ii) of Lemma in I.6.3 of \cite{MoeW-GL}: if  $J(([-t,t]^{(\rho)}),a,b)$ \, and  \, $J(\d([-t',t']^{(\rho')}),a',b')$ are not linked, then 
 $$
 J(([-t,t]^{(\rho)}),a,b)\t J(\d([-t',t']^{(\rho')}),a',b')
 $$
  is irreducible.

\subsection{Another interpretation of the linking condition} We  analyze the first condition in (3).   This condition  is equivalent to the fact  that the following hold:

$b-t>b'-t', $ \ \ \  $a-t>a'-t' $ \ and \ $a-t\leq b'+t'+1$

and

$b+t>b'+t', $ \ \ \  $a+t>a'+t' $ \ and \ $a-t\leq b'+t'+1$.

Write
$$
\left(
\begin{matrix}
A  \hskip3mm & B  \hskip3mm
\\
    \hskip3mm C&    \hskip3mm D
\end{matrix}
 \right)
: =
\left(
\begin{matrix}
a-t  \hskip3mm & a+t  \hskip3mm
\\
    \hskip3mm b-t&    \hskip3mm b+t
\end{matrix}
\right),
$$
$$
\left(
\begin{matrix}
A ' \hskip3mm & B'  \hskip3mm
\\
    \hskip3mm C'&    \hskip3mm D'
\end{matrix}
 \right)
: =
\left(
\begin{matrix}
a'-t'  \hskip3mm & a'+t'  \hskip3mm
\\
    \hskip3mm b'-t'&    \hskip3mm b'+t'
\end{matrix}
\right).
$$
Now the first condition in (3) becomes

$C'<C, $ \ \ \  $A'<A $ \ and \ $A\leq D'+1$

and

$D'<D, $ \ \ \  $B'<B $ \ and \ $A\leq D'+1$,

i.e.
$$
\left(
\begin{matrix}
A ' \hskip3mm & B'  \hskip3mm
\\
    \hskip3mm C'&    \hskip3mm D'
\end{matrix}
 \right)
 <{strong}
 \left(
\begin{matrix}
A  \hskip3mm & B  \hskip3mm
\\
    \hskip3mm C&    \hskip3mm D
\end{matrix}
 \right)
 $$
 and $[A',D']_\Z\cup [A,D]_\Z$ is a $\Z$-segment.

Analysis of the second condition in (3) gives the opposite inequality.

Therefore, the above analysis of the linking condition together with the formula \eqref{connection}, tells us that  the irreducibility result of C. M\oe glin and J.-L. Waldspurger from Lemma I.6.3 (recalled in 8.3) is exactly one implication of the irreducibility criterion of Theorem 7.2 in the case that $\C A$ is a field.

\section{Relation with a result of B. Leclerc, M. Nazarov and J.-Y. Thibon}

In this section we describe what gives  specialization of  Theorem 1 from \cite{LNT} to the case of unramified representations
of general linear groups over a non-archimedean local field $F$.

Let $\alpha=(\alpha_1,\dots,\alpha_r)$, $\alpha_1\geq \alpha_2\geq\dots\geq \alpha_r$ be a partition of $m\geq1$ into positive integers, and $x\in \mathbb Z$. Denote by 
$$
\mathbf m(\alpha,x)=\sum_{i=1}^r\ ([x-i+1,x-i+\alpha_i]) \in M(\C S(\Z)).
$$
To such multiset $\mathbf m(\alpha,x)$  attach the set
$$
\mathcal I(\alpha,x) \ = \ \langle-\infty,x-r]\ \ \cup\ \ \underset{i=1}{\overset{r}\cup}\ \{x-i+\alpha_i+1\}.
$$
In other words, $\mathcal I(\alpha,x)$ consist of all the integers which are strictly left from the support of $\mathbf m(\alpha,x)$, together with all the integers that one gets increasing each end of segment in $\mathbf m(\alpha,x)$ by 1. Let $\rho\in \C C$ and denote $\pi=L(\mathbf m(\alpha,x)^{(\rho)}).$ Then we define
$$
\C I_\rho(\pi):=\mathcal I(\alpha,x).
$$
Observe that above $\pi$  is a special case of a ladder representation defined in \cite{LMi}.

We  can  graphically interpret $\mathbf m(\alpha,x)$ by 
\begin{equation} 
\label{g1111111}
\xymatrix@C=.6pc@R=.1pc
{ 
x &&&&   \ \bullet& \bullet\ar @{-}[l] & \bullet\ar @{-}[l] & \bullet\ar @{-}[l] & \bullet\ar @{-}[l] & \bullet\ar @{-}[l] & \bullet\ar @{-}[l] & \bullet\ar @{-}[l] &  x-1+\alpha_1
\\
x-1&&&  \ \bullet& \bullet\ar @{-}[l] & \bullet\ar @{-}[l] & \bullet\ar @{-}[l] & \bullet\ar @{-}[l] & \bullet\ar @{-}[l] & \bullet\ar @{-}[l]  &  && x-2+\alpha_2
\\ 
&&&..&..&..&..&..&&&
\\
x-(r-1)+1&&  \ \bullet& \bullet\ar @{-}[l] & \bullet\ar @{-}[l] & \bullet\ar @{-}[l] & \bullet\ar @{-}[l] & \bullet\ar @{-}[l] &   &&&& x-r+\alpha_r
\\
x-r+1&   \bullet& \bullet\ar @{-}[l] & \bullet\ar @{-}[l] & \bullet\ar @{-}[l] & \bullet\ar @{-}[l] &  &   &&&&& x-r+\alpha_r,
} 
\end{equation}
and further, $\mathcal I(\alpha,x)$  can be graphically interpreted by
\begin{equation} 
\label{g22}
\xymatrix@C=.6pc@R=.1pc
{ 
\cdot\cdot\cdot & \circ & \circ & \circ & \circ \hskip16mm  & &&&&&  \circ && \hskip3mm \circ &..&\hskip1mm\circ&& \hskip1mm \circ \hskip20mm
} 
\end{equation}

For a subset $X$ of $\mathbb Z$ we  denote by $[X]_\mathbb Z$ the smallest segment in $\mathbb Z$ containing $X$.

Now we have

\begin{theorem} $($\cite{LNT}$)$
 \label{LNT-Th} 
 Let $\alpha=(\alpha_1,\dots,\alpha_r)$, $\alpha_1\geq \alpha_2\geq\dots\geq \alpha_r$ and $\beta=(\beta_1,\dots,\beta_s)$, $\beta_1\geq \beta_2\geq\dots\geq\beta_s$ be partitions of positive integers $m$ and $n$ respectively, and $x,y\in \mathbb Z$. Suppose that $\chi$ is an unramified character of $F^\times$. Then:
\begin{enumerate}

\item If
$$
y<x,
$$
then $L(\mathbf m(\alpha,x)^{(\chi)})\times L(\mathbf m(\beta,y)^{(\chi)})$ reduces $\iff$\footnote{In the paper \cite{LNT} is the condition: if there exist $i<j<k$ such that $i,k\in \mathcal I(\alpha,x)\backslash \mathcal I(\beta,y)$ and $j \in \mathcal I(\beta,y)\backslash \mathcal I(\alpha,x)$. }
$$
[\mathcal I(\alpha,x)\backslash \mathcal I(\beta,y)]_\mathbb Z \cap (\mathcal I(\beta,y)\backslash \mathcal I(\alpha,x))\ne\emptyset.
$$

\item The case $$x<y$$ reduces to the previous case applying commutativity of $R$, and the condition for reducibility becomes $\iff$
$$
(\mathcal I(\alpha,x)\backslash \mathcal I(\beta,y)) \cap [\mathcal I(\beta,y)\backslash \mathcal I(\alpha,x)]_\mathbb Z\ne\emptyset.
$$

\item Suppose
$$
x=y.
$$
Then $L(\mathbf m(\alpha,x)^{(\chi)})\times L(\mathbf m(\beta,y)^{(\chi)})$ reduces $\iff$\footnote{In the paper \cite{LNT} is the condition: if there exist $i,j,k,l$ such that $i,k\in \mathcal I(\alpha,x)\backslash \mathcal I(\beta,y)$ and $j,l \in \mathcal I(\beta,y)\backslash \mathcal I(\alpha,x)$ satisfying either $i<j<k<l$ or $j<i<l<k.$ }
$$
[\mathcal I(\alpha,x)\backslash \mathcal I(\beta,y)]_\mathbb Z \cap (\mathcal I(\beta,y)\backslash \mathcal I(\alpha,x))\ne\emptyset
$$
and
$$
(\mathcal I(\alpha,x)\backslash \mathcal I(\beta,y)) \cap [\mathcal I(\beta,y)\backslash \mathcal I(\alpha,x)]_\mathbb Z\ne\emptyset.
$$

\end{enumerate}
\end{theorem}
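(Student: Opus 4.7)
My plan is to adapt the combinatorial machinery built in Sections \ref{linked} and \ref{non-linked} --- the criterion \eqref{RC}, the Badulescu--Lapid--M\'inguez criterion \eqref{BLM''}, and the non-descent argument of Proposition 6.3 --- from essentially Speh shape to the ladder shape $\mathbf m(\alpha,x)$ considered here. The representations $L(\mathbf m(\alpha,x)^{(\chi)})$ are ladder representations in the sense of \cite{LMi}: their segments $[x-i+1,x-i+\alpha_i]$ have strictly decreasing beginnings $x,x-1,\dots,x-r+1$ and, since $\alpha_i\ge\alpha_{i+1}$, strictly decreasing ends $x-1+\alpha_1>x-2+\alpha_2>\dots>x-r+\alpha_r$; the essentially Speh case is $\alpha_1=\cdots=\alpha_r$.

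The first technical step would be an explicit description of the Zelevinsky dual of a ladder: running MWA$^\lambda$ on $\mathbf m(\alpha,x)$ starting at the unique maximal end $x-1+\alpha_1$ and peeling one box off each row of the staircase picture \eqref{g1111111} shows that $\mathbf m(\alpha,x)^t$ is itself a ladder multisegment, of shape given by the conjugate partition $\alpha^\ast$ with a computable base point. In this picture the auxiliary set $\mathcal I(\alpha,x)$ acquires a clean meaning: apart from the tail $\langle-\infty,x-r]$, its nontrivial elements $x-i+\alpha_i+1$ are precisely the positions immediately to the right of the ends of the segments of $\mathbf m(\alpha,x)$, i.e., the places where a segment of another multisegment could juxtapose on the right to one of $\mathbf m(\alpha,x)$, and (after reinterpretation through the Zelevinsky dual) they simultaneously control which segments can juxtapose on the left to $\mathbf m(\alpha,x)^t$.

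The combinatorial heart of the argument is then to translate the set-theoretic conditions of the three cases of the statement into contact/crossing language of Subsection 3.3. A segment of $a=\mathbf m(\alpha,x)^{(\chi)}$ is juxtaposed with a segment of $b^t$ precisely when an end-plus-one from $a$ coincides with a beginning from $b^t$, and by the dual description above the beginnings of $b^t$ are encoded by $\mathcal I(\beta,y)$. Consequently an element of $\mathcal I(\alpha,x)\setminus\mathcal I(\beta,y)$ lying inside $[\mathcal I(\beta,y)\setminus\mathcal I(\alpha,x)]_\Z$ witnesses contact of $a$ with $b^t$, and the symmetric condition witnesses contact of $a^t$ with $b$. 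A case-split on the sign of $y-x$ should match the three clauses of the statement exactly with the two-sided crossing condition; when crossing fails, \eqref{BLM''} gives irreducibility immediately.

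For the converse, when the $\mathcal I$-condition holds, I would compute $(a+b)^t$ by running MWA$^\lambda$ on $a+b$ and exhibit an enforced new linking witnessing $(a+b)^t\ne a^t+b^t$, after which \eqref{RC} yields reducibility; the argument parallels Proposition 5.1. In the residual case of contact on both sides without the $\mathcal I$-condition --- which one must verify actually cannot arise for ladders on the same unramified line --- one adapts Proposition 6.3: verify additivity $(a+b)^t=a^t+b^t$ (ladder analogue of Lemma 6.1) and rule out $c<a+b$ with $c^t<(a+b)^t$ via the length bound of Subsection \ref{est}. The main obstacle I anticipate is precisely this last adaptation: in Section \ref{non-linked} the uniform segment length of $a$ is used essentially (for instance in Lemma 6.2, bounding segment lengths of $a_2^t$ by $|[B_2,D_2]|$). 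For ladders this must be replaced by bounds indexed by the rows of $\alpha$ and $\beta$, requiring a more delicate bookkeeping that matches each step of MWA$^\lambda$ to the appropriate row --- technically heavier but conceptually parallel.
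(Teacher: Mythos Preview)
The paper does not prove this theorem: it is quoted from \cite{LNT} as an external result, and the subsequent text in Section~9 merely \emph{applies} it to essentially Speh representations to check consistency with Theorem~7.2. So there is no ``paper's own proof'' to compare against --- your proposal is an attempt at an independent proof of a result the author takes as input.

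As an independent proof sketch, your plan has a genuine gap that you yourself partly anticipate. The adaptation of Proposition~6.3 from Speh to arbitrary ladder shape is not just ``technically heavier but conceptually parallel'': the argument there hinges on the fact that all segments of $a_1$ have the same length $d$ (and those of $a_2$ length $e$), which is what forces every step of MWA$^\la$ on $a_1+c$ to stay entirely within one of the two pieces (see the dichotomy $e\le d$ versus $d\le e$ in case (2)(b) of Proposition~5.1, and the length comparisons $C_2<C_1$, $B_2<B_1$ driving the step-separation in Proposition~6.3). For general partitions $\alpha,\beta$ the row lengths vary, and a step of MWA$^\la$ can genuinely weave between rows of $a$ and rows of $b$; the clean separation $(a+c)^t=a^t+c^t$ need not hold, and the length bound of~\ref{est} no longer pins down which piece a segment of $c^t$ came from. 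Similarly, your claimed translation --- that the $\mathcal I$-conditions coincide exactly with crossing in the sense of~3.3 --- is asserted rather than established; for general ladders this correspondence is subtler than in the Speh case (where Lemma~\ref{one-condition} collapses the two contact conditions to one), and the residual ``crossed but $\mathcal I$-condition fails'' case you mention cannot simply be dismissed. In short, the Speh arguments of Sections~\ref{linked}--\ref{non-linked} do not extend to ladders without substantial new ideas, which is consistent with the author's remark that even the unramified ladder case in \cite{LNT} relies on the deep results of \cite{L}.
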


Consider   representations
$$
\pi_i=u_{ess}
\left(
\begin{matrix}
A_i  \hskip3mm & B_i  \hskip3mm
\\
    \hskip3mm C_i&    \hskip3mm D_i
\end{matrix}
\right)^{(\chi)}, \ i=1,2.
$$
We shall now apply Theorem \ref{LNT-Th} to test reducibility of $\pi_1\t\pi_2$.

Observe that
$$
\mathcal I_\chi(\pi_i)=\langle-\infty,A_i-1] \cup [B_i+1,D_i+1].
$$
We consider three cases. 

Suppose $C_1=C_2$. Without lost of generality we can assume $A_1\leq A_2$. Then $\mathcal I_\chi (\pi_1)\backslash \mathcal I_\chi (\pi_2)$ $= [B_1+1,D_1+1]\backslash [B_2+1,D_2+1],$ and therefore 
$$
[\mathcal I_\chi (\pi_1)\backslash \mathcal I_\chi (\pi_2)]_\mathbb Z = [B_1+1,D_1+1]\backslash [B_2+1,D_2+1].
$$
From the other side
$$
\mathcal I_\chi (\pi_2)\backslash \mathcal I_\chi (\pi_1)\subseteq \Z\backslash [B_1+1,D_1+1].
$$
These two subsets are disjoint. Therefore, Theorem \ref{LNT-Th} implies irreducibility.

Suppose now $C_1<C_2$. We first consider the case $A_2\leq A_1$. Now $\mathcal I_\chi (\pi_2)\backslash \mathcal I_\chi (\pi_1)= [B_2+1,D_2+1]\backslash [B_1+1,D_1+1],$ and therefore 
$$
[\mathcal I_\chi (\pi_2)\backslash \mathcal I_\chi (\pi_1)]_\mathbb Z = [B_2+1,D_2+1]\backslash [B_1+1,D_1+1].
$$
This is obviously disjoint with $\mathcal I_\chi (\pi_1)\backslash \mathcal I_\chi (\pi_2)$.  Now Theorem \ref{LNT-Th} implies irreducibility.

It remains to consider the case $C_1<C_2$ and $A_1< A_2$. Suppose first that $D_2\leq D_1$. Then 
$$
\mathcal I_\chi (\pi_1)\backslash \mathcal I_\chi (\pi_2)= [B_1+1,D_1+1]\backslash [B_2+1,D_2+1]  .
$$
 Further
$$
\mathcal I_\chi (\pi_2)\backslash \mathcal I_\chi (\pi_1)= [A_1, A_2-1]\cup [B_2+1,D_2+1]\backslash [B_1+1,D_1+1].
$$
Now $D_2\leq D_1$ implies
$$
[\mathcal I_\chi (\pi_2)\backslash \mathcal I_\chi (\pi_1)]_\Z \subseteq [A_1, B_1].
$$
We see again disjointness and again get irreducibility.

We are left with the case $C_1<C_2,A_1< A_2$ and $D_1 < D_2$. Recall
$$
\mathcal I_\chi (\pi_1)\backslash \mathcal I_\chi (\pi_2)= [B_1+1,D_1+1]\backslash [B_2+1,D_2+1]  
$$
and 
$$
\mathcal I_\chi (\pi_2)\backslash \mathcal I_\chi (\pi_1)= [A_1, A_2-1]\cup [B_2+1,D_2+1]\backslash [B_1+1,D_1+1].
$$
Suppose $B_2\leq B_1$. Then $\mathcal I_\chi (\pi_1)\backslash \mathcal I_\chi (\pi_2)$ is empty, which implies irreducibility.
Suppose now $B_1\leq B_2$. Then $[\mathcal I_\chi (\pi_2)\backslash \mathcal I_\chi (\pi_1)]_\Z= [A_1,D_2+1]$. $B_1\in \mathcal I_\chi (\pi_2)\backslash \mathcal I_\chi (\pi_1)$. Now obviously, $B_1$ is in the intersection. This implies reducibility. 

Therefore, we have just seen that Theorem \ref{LNT-Th} implies our result in the unramified  case when $\C A $ is a field.

\begin{remark}
Observe that the theory of types for general linear groups over division algebras developed in \cite{Se1} - \cite{SeSt4}, together with the theory of covers from \cite{BK}, should relatively easy imply that Theorem \ref{LNT-Th} holds if one puts any $\rho\in \mathcal C$ instead of $\chi$ (see \cite{Se} and \cite{BHLS} for such applications of \cite{Se1} - \cite{SeSt4} and \cite{BK}; we have not checked all details for the implication  in the case that we consider in this section). Therefore, the main result of our paper should follow from \cite{LNT} using \cite{Se1} - \cite{SeSt4} and \cite{BK}.
\end{remark}


\begin{thebibliography}{99}


\bibitem{Au}
 Aubert, A. M., {\it Dualit\'{e} dans le groupe de Grothendieck de la cat\'{e}gorie
des repr\'{e}sentations lisses de longueur finie d'un groupe r\'{e}ductif $p$-
adique}, Trans. Amer. Math. Soc.  347 (1995), 2179-2189;  {\it Erratum},
Trans. Amer. Math. Soc  348 (1996),  4687-4690


 
 \bibitem{Ba-Sp}
 Badulescu, A.~I., {\it On p-adic Speh representations}, Bulletin de la SMF, to appear.
 
\bibitem{BHLS}  Badulescu, A. I., Henniart, G., Lemaire, B. and S\'echerre, V., {\it Sur le dual unitaire de $GL_r(D)$},  Amer. J. Math. 132 no. 5 (2010),  1365-1396.


\bibitem{BLM}  Badulescu, A.~I., Lapid, E. and M\'ingues, A., {\it Une condition suffisante pour l'irreducibilite d'une induite parabolique de $GL(m,D)$}, Ann. Inst. Fourier, to appear.







\bibitem{BR-Tad}
 Badulescu, A.~I. and Renard, D.~A.,
{\it Sur une conjecture de Tadi\'c},
 Glasnik Mat.
  39 no. 1
(2004),
 49-54.
 
 \bibitem{BR-inv}
 Badulescu, A.~I. and Renard, D.~A., {\it Zelevinsky involution and M\oe glin-Waldspurger algorithm for $GL_n(D)$}, 
Functional analysis IX, 9--15, Various Publ. Ser. (Aarhus), 48, Univ. Aarhus, Aarhus, 2007.

\bibitem{BR-arch}
 Badulescu, A.~I. and Renard, D.~A.,
{\it
Unitary dual of $GL_n$ at archimedean places and global Jacquet-Langlands correspondence},
 Compositio Math. 146 no. 5 (2010),  1115-1164.


\bibitem{BK}
 Bushnell, C. J. and Kutzko, P. C., {\it Smooth representations of reductive $p$-adic groups: structure theory via types}, Proc. London Math. Soc. (3) 77 (1998), no. 3, 582-634.



\bibitem{DKV}
 Deligne, P.,  Kazhdan, D. and Vign\'eras, M.-F.,
{\it Repr\'esentations des alg\`ebres centrales simples 
$p$-adiques}, in book
"Repr\'esentations des Groupes
R\'eductifs sur un Corps Local" by Bernstein, J.-N.,
Deligne, P.,  Kazhdan, D. and Vign\'eras, M.-F. ,
Hermann, Paris, 1984.

 \bibitem{LMi} Lapid, E. and M\'inguez, A.,
{\it On a determinantal formula of Tadi\'c}, {\it Amer. J.  Math.}, to appear.




\bibitem{LNT} Leclerc, B., Nazarov, M. and Thibon, J.-Y., {\it Induced representations of affine Hecke algebras and canonical bases of quantum groups}, Studies in memory of Issai Schur (Chevaleret/Rehovot, 2000), 115-153, Progr. Math., 210, Birkh\"user Boston, Boston, MA, 2003.


\bibitem{L}
Lusztig, G.,
{\it Quivers, perverse sheaves, and quantized enveloping algebras},
J. Amer. Math. Soc. 4 (1991), no. 2, 365-421. 

\bibitem{M}
M\'inguez, A., {\it Sur l'irr\'eductibilit\'e d'une induite parabolique}, J. Reine Angew. Math. 629 (2009), 107-131.

\bibitem{MS}
M\'inguez, A. and S\'echerre, V., {\it Repr\'esentations banales de GL(m,D)}, Compos. Math., to appear.

 \bibitem{MoeW-alg}
 M{\oe}glin, C. and Waldspurger, J.-L., {\it Sur l'involution de Zelevinski}, J.  Reine Angew. Math. 372 (1986), 136-177. 



 \bibitem{MoeW-GL}
 M{\oe}glin, C. and Waldspurger, J.-L.,
{\it  Le spectre residuel de $GL(n)$},
 Ann. Sci. \'{E}cole Norm. Sup 
 22
 (1989),
 605-674.
 

 \bibitem{Ro}
Rodier, F.
{\it  Repr\'esentations de $GL(n,k)$ o\`u $k$ est un
corps $p$-adique},
 S\'eminaire Bourbaki no 587 (1982), Ast\'erisque
 92-93 (1982),
 201-218.


 \bibitem{ScSt}  Schneider, P. and  Stuhler, U., 
{\it Representation theory and sheaves on the Bruhat-Tits building},
Publ. Math. IHES 85
(1997),
 97-191.
 
\bibitem{Se1}  S\'echerre, V., {\it Repr\'esentations lisses de $GL(m,D)$. I. Caract\`eres simples}, Bull. Soc. Math. France 132 (2004), no. 3, 327-396.
  
\bibitem{Se2}   S\'echerre, V., {\it Repr\'esentations lisses de $GL(m,D)$. II. $\beta$-extensions}, Compos. Math. 141 (2005), no. 6, 1531-1550.
   
\bibitem{Se3}   S\'echerre, V., {\it Repr\'esentations lisses de $GL_m(D)$. III. Types simples}, Ann. Sci. \'Ecole Norm. Sup. (4) 38 (2005), no. 6, 951-977.
   
\bibitem{SeSt4}    S\'echerre, V. and Stevens, S., {\it Repr\'esentations lisses de $GL_m(D)$. IV. Repr\'esentations supercuspidales},  J. Inst. Math. Jussieu 7 (2008), no. 3, 527-574.

\bibitem{Se}
 S\'echerre, V.,
{\it Proof of the Tadi\'c conjecture (U0) on the unitary dual
of GL$_m(D)$},
 J. reine angew. Math.
 626 (2009),
 187-203.
 
 \bibitem{Sp} Speh, B., {\em Unitary representations of $GL(n, \mathbb R)$ with non-trivial
$(g, K)$- cohomology}, Invent. Math. { 71} (1983), 443-465.

\bibitem{T-R-C-old}
Tadi\'c, M., {\em
Unitary  representations of general linear group over  real and complex field, }
preprint MPI/SFB 85-22 Bonn (1985).

\bibitem{T-AENS}
Tadi\'c, M., {\em Classification of unitary representations in irreducible
representations of general linear group (non-archimedean case)}, Ann. Sci.
Ecole Norm. Sup. { 19} (1986), 335-382.




\bibitem{T-div-a}
Tadi\'{c}, M.,
{\it Induced representations of $GL(n,A)$ for
$p$-adic division algebras $A$},
 J. Reine Angew. Math.
 405
 (1990),
 48-77.
 
 
 \bibitem{T-irr}
Tadi\'c, M., {\em
On reducibility of parabolic induction,}  Israel J. Math.  { 107}  (1998), 29-91.



\bibitem{T-R-C-new}
Tadi\'c, M., {\em $GL(n,\mathbb C)\hat{\ }$ and
$G L(n,\mathbb R)\hat{\ }$},
in ``Automorphic Forms and $L$-functions II,
Local Aspects",
Contemporary Mathematics
489,
2009,
 285-313.

\bibitem{T-CJM} Tadi\'c, M.,
{\it On reducibility and
unitarizability for classical $p$-adic groups, some general results}, 
 Canad. J. Math. 61 (2009),  427-450.
 


\bibitem{Z} Zelevinsky, A. V.,
{\it  Induced representations of reductive p-adic groups II. On
irreducible representations of GL(n)},
    Ann. Sci. \'{E}cole Norm. Sup.
  13 (1980),
pp. 165-210.






\end{thebibliography}
\end{document}